\documentclass[reqno]{amsart}
\usepackage{mathrsfs}
\usepackage{amsthm}
\usepackage{amsmath}
\usepackage{amssymb}
\usepackage{amscd}
\usepackage[pdftex]{graphicx}
\usepackage{hyperref}
\usepackage{color}
\usepackage{comment}
\usepackage{pstricks}
\definecolor{omegagray}{gray}{0.93}
\definecolor{mygray}{gray}{0.55}
\def\beq{\begin{equation}}
\def\eeq{\end{equation}}
\def\ba{\begin{array}}
\def\ea{\end{array}}

\def\C{\mathbb C}

\allowdisplaybreaks

\numberwithin{equation}{section}
\newenvironment{abs}{\textbf{Abstract}\mbox{  }}{ }
\newenvironment{key}{\textbf{Keywords}\mbox{  }}{ }
\newtheorem*{theorem*}{Theorem A}
\newtheorem{theorem}{Theorem}[section]
\newtheorem{lemma}[theorem]{Lemma\hspace*{0.1em}}

\newtheorem{proposition}[theorem]{Proposition\hspace*{0.1em}}
\newtheorem{corollary}[theorem]{Corollary\hspace*{0.1em}}
\newtheorem{remark}[theorem]{Remark\hspace*{0.1em}}
\renewenvironment{proof}{\textbf{Proof.}}{\hspace*{0.1em}\hfill$\Box$}

\begin{document}
\title[Onofri trace inequality ]{\textbf{
Classification of solutions to the Liouville equation with a nonlinear Robin boundary condition on the unit disk}}

\author[J. Dou,   Y. Hu ]{Jingbo Dou, Yunyun Hu, Keqing Peng}

\address{Jingbo Dou, School of Mathematics and Statistics,
Shaanxi Normal University,
Xi'an, 710119, P. R. China}
\email{jbdou@snnu.edu.cn}
\medskip
\address{Yunyun Hu, School of Mathematics and Statistics,
Shaanxi Normal University,
Xi'an, 710119, P. R. China}
\email{yhu@snnu.edu.cn}
\medskip

\address{Keqing Peng, School of Mathematics and Statistics,
Shaanxi Normal University,
Xi'an, 710119, P. R. China}
\email{peng\_keqing@163.com}
\medskip

\date{}

\maketitle

% ----------------------------------------------------------------
\noindent
\begin{abs}
In this paper, we study the nonlinear boundary value problem
\begin{equation*}
 \begin{cases}
-\Delta u=e^{2u},& \mbox{in }  {\mathbb{D}},\\
\frac{\partial u}{\partial\nu}+\lambda=e^u ,& \mbox{on }  {\mathbb{S}^{1}},
 \end{cases}
  \end{equation*}
where $\mathbb D$ is the unit disk, $\lambda$ is a constant and $\nu$ denotes the outer unit normal on $\mathbb S^1$. For $0<\lambda\le2$, we establish a complete classification of smooth solutions. For $2<\lambda\leq3$, we prove a dichotomy between radial solutions and nonradial solutions. We develop a Hardy-Wronskian boundary rigidity method that transforms the nonlinear boundary value problem into a spectral rigidity problem for normalized holomorphic frames.
This method exploits the complex-analytic structure of the Liouville equation with a Robin boundary condition and provides a new rigidity framework for related two-dimensional elliptic boundary value problems.
\end{abs}\\
\begin{key}
Nonlinear elliptic equations, classification, normalized holomorphic lift, Hardy space, developing map.
\end{key}\\
\textbf{Mathematics Subject Classification(2020).}
35J25, 35R01, 58J05, 30H10
\indent

%---------------------------------------------------------------------------------
\section{\textbf{Introduction}\label{Section 1}}

Let $(\Sigma, g)$ be a compact surface with boundary, and let
$$g_u=e^{2u}g$$
be a conformal metric to $g$.
The Gaussian curvature $K_{g_u}$ and the boundary geodesic curvature $\kappa_{g_u}$
then satisfy
\begin{equation}\label{gubvv}
\begin{cases}
-\Delta_{g}u+K_g=K_{g_u}e^{2u},& \mbox{in }  \Sigma,\\
\frac{\partial u}{\partial\nu}+\kappa_g=\kappa_{g_u}e^{u},& \mbox{on }  {\partial \Sigma}.
\end{cases}
\end{equation}
The problem of prescribing these curvatures within a conformal class is a classical theme in geometric analysis.
On closed surfaces, the prescribed Gaussian curvature problem was systematically studied in the seminal works of \cite{KW1971, KW1974, CY1988}.

In the last decades, various versions of problem \eqref{gubvv} have been extensively studied from both analytic and geometric viewpoints. Chang and Yang \cite{CY1988} investigated the case $\kappa_{g_u} = 0$. The scalar-flat case $K_{g_u}= 0$  was studied in \cite{CL1996,LL2005,LH2005}.
The problems with constant prescribed curvatures $K_{g_u}$ and $\kappa_{g_u}$ have also been considered. Using a parabolic flow, Brendle \cite{B2002}
proved that this problem admits a solution for some constant curvatures.
When $\Sigma$ is a disk or an annulus, the classification of solutions are given by using complex analysis techniques in \cite{HW2006, J2012}.
The case of the half-plane has also been studied see \cite{LZ1995, Z2003}.

In particular, Wang \cite{W2017} established existence and classification
results for a conformal metric with zero scalar curvature
and constant mean curvature on the boundary.
Wang \cite{W2017} studied the following equation
\begin{equation}\label{WfOnofri-E-L}
 \begin{cases}
-\Delta_g u=0,& \mbox{in }  {\Sigma},\\
\frac{\partial u}{\partial\nu}+\lambda=e^u ,& \mbox{on }  {\partial \Sigma}.
 \end{cases}
 \end{equation}
Using a pointwise analysis and the strong maximum principle, the author proved that for $0<\lambda<1$, $u$ is a constant. When $\lambda=1$ and $u$ is non-constant, $\Sigma$ is isometric to the unit disk $ \overline{\mathbb{B}^2}$ and $u$ must be of the form
\begin{align*}
u=\log
\frac{1-|a|^2}{1+|a|^2 |x|^2 -2Re(x\bar{a})}
\end{align*}
for some $a\in\mathbb{B}^2$. On the unit disk, Wang further obtained a complete classification for all $\lambda>0$ using a
Fourier-Hilbert transform approach, which is specific to the harmonic setting.

In the case $\lambda=1$ on $\overline{\mathbb{B}^2}$, equation \eqref{WfOnofri-E-L} can be seen as the Euler-Lagrange equation of the Onofri trace inequality.  It has been studied by many authors
by various different methods, see \cite{HW2006,OPS1988,Z2003}.

When $\Sigma$ is the unit disk, curvature prescription problems naturally lead to Liouville type elliptic equations with exponential nonlinearities.
Let $(\Sigma,g)=({\mathbb{D}},g_0)$, where $g_0=|dz|^2$. The conformal
metric $g_u=e^{2u}g$ becomes
$g_u=e^{2u}g_0$. Then the conformal curvature transformation law \eqref{gubvv} simplifies to
\begin{equation}\label{regubvv}
\begin{cases}
-\Delta u=K_{g_u}e^{2u},& \mbox{in }  {\mathbb{D}},\\
\frac{\partial u}{\partial\nu}+1=\kappa_{g_u}e^{u},& \mbox{on }  \mathbb{S}^{1},
\end{cases}
\end{equation}
where $\nu$ is the outer unit normal on $\mathbb{S}^{1}$.
Cruz-Bl\'{a}zquez and Ruiz \cite{BR2018} studied the problem of prescribing the Gaussian curvature and the geodesic curvature on the disk.
More precisely, under the normalization $g_u=e^{2u}g$, their problem can be written as \eqref{regubvv}. Under suitable symmetry assumptions, they proved the existence of solution in a variational framework.

Recently, Dou and Xu \cite{DX2025} studied the semilinear elliptic equation
\begin{equation}\label{DX-EL}
 \begin{cases}
-\Delta u=e^{2u},& \mbox{in }  {\mathbb{B}^2},\\
\frac{\partial u}{\partial\nu}+\lambda=0 ,& \mbox{on }  {\mathbb{S}^{1}},
 \end{cases}
\end{equation}
where $\mathbb{B}^2\subset \mathbb R^2$ is a unit ball.
They established a complete classification of solutions to \eqref{DX-EL} for $\lambda\in(0,1]$.  They proved that for $0<\lambda<1$, $u$ is given by
\begin{align*}
u=\log
\frac{2R}{R^2|x|^2+1},
\end{align*}
where $R=\sqrt{\frac{\lambda}{-\lambda+2}}$.
If $\lambda=1$, then $u$ must be of form
\begin{align*}
u=\log
\frac{2(1-|\zeta|^2)}{(1+|\zeta|^2)(|x|^2-\frac{4}{1+|\zeta|^2}\langle\zeta,x\rangle+1)},
\end{align*}
for some $\zeta\in\mathbb{B}^2$.
Their proof relies on some integral identities and Obata identity.
They also showed that equation \eqref{DX-EL} is the Euler-Lagrange equation of an Onofri-type inequality with Neumann boundary conditions. In \cite{DX2025}, a fundamental open problem remained: can one classify all
solutions to \eqref{DX-EL} for $1<\lambda<2$?

For more results concerning the exponential Neumann problem and related geometric boundary problems, see \cite{LZ1995, GM2009,GJM2012, Ruiz2024} and the references therein. These problems have been investigated via moving spheres, complex-analytic methods, variational techniques, and blow-up analysis.

%For more results concerning the existence and classification of positive solution for the
%related boundary value problem, please refer to \cite{ BWZ2014,SSR2024,SSR2026} and
%the references therein.

In most previous literature, classification results on the disk mainly concern either the scalar flat boundary problem or the Neumann type curvature equation. In contrast, not much is known for the problem coupling an interior Liouville term with a nonlinear boundary curvature term.
In this paper, we investigate the following
boundary value problem on the unit disk
\begin{equation}\label{Onofri-E-L}
 \begin{cases}
-\Delta u=e^{2u},& \mbox{in }  {\mathbb{D}},\\
\frac{\partial u}{\partial\nu}+\lambda=e^u ,& \mbox{on }  {\mathbb{S}^1},
 \end{cases}
\end{equation}
where $\lambda\in \mathbb R$.

The Robin condition in \eqref{Onofri-E-L} is a special nonlinear
boundary curvature law. Indeed, for the conformal metric $g_u=e^{2u}|dz|^2$,
the boundary geodesic curvature is given by
$$\kappa_{g_u}=e^{-u}(1+\partial_\nu u)=1+(1-\lambda)e^{-u}.$$
Hence, the case $\lambda=1$ corresponds to constant geodesic
curvature $\kappa_{g_u}\equiv1$, while $\lambda\neq1$ yields the
solution-dependent boundary curvature.

Throughout the paper, we write
$$\mathbb{D}=\mathbb{B}^2=\{z=x_1+ix_2 \in\mathbb{C}:|z|<1\},\quad\ \partial\mathbb D=\mathbb S^1,$$
$$\operatorname{Aut}(\mathbb D)=\{\psi: \mathbb D\rightarrow \mathbb D\mid \psi\ \mbox{and}\ \psi^{-1}\  \mbox{are\ holomorphic } \}.$$

Our first main result provides a complete classification of solutions in the non-conformally invariant range $\lambda\leq2$, $\lambda\neq1$.

\begin{theorem}\label{radial-rigidity}
Let $\lambda\leq 2$, $\lambda\neq1$, and
$u\in C^\infty(\overline{\mathbb D})$ be a solution of
\eqref{Onofri-E-L}. The following results hold.

 $(i)$ If  $\lambda\leq0$, then equation \eqref{Onofri-E-L} has no solution.

 $(ii)$  If $0<\lambda\leq2$ with $\lambda\neq1$, then $u$ must be of the form
\begin{equation}\label{R-class-large}
u(z)=\log\frac{2R}{1+R^2|z|^2},
\end{equation}
where $R>0$ is the unique positive solution of
\begin{equation}\label{R-disc-range}
(\lambda-2)R^2-2R+\lambda=0.
\end{equation}
\end{theorem}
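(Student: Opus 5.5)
The plan is to prove $(i)$ by a flux identity, and to prove $(ii)$ by first showing that every solution is radial and then solving the radial ODE explicitly.

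\emph{Part $(i)$.} Integrating $-\Delta u=e^{2u}$ over $\mathbb D$ and using the Robin condition gives
$$\int_{\mathbb D}e^{2u}\,dx=-\int_{\mathbb S^1}\partial_\nu u\,ds=\int_{\mathbb S^1}(\lambda-e^u)\,ds .$$
If $\lambda\le 0$, the right-hand side is strictly negative while the left-hand side is positive. This is a contradiction, so there is no solution.

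\emph{Part $(ii)$, radial symmetry.} This is the main step. Set $w=\partial_\theta u=x_1u_{x_2}-x_2u_{x_1}$. Since the problem is rotation invariant, $w$ solves the linearized problem
$$-\Delta w=2e^{2u}w \ \text{ in }\mathbb D,\qquad \partial_\nu w=e^u w \ \text{ on }\mathbb S^1,$$
and it has zero mean on every circle $|z|=r$. I would show $w\equiv 0$ in two steps.

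First, use the Liouville representation $u=\log\frac{2|f'|}{1+|f|^2}$, where $f$ is a locally univalent meromorphic developing map on $\mathbb D$. In these terms the holomorphic quantity $\Phi=u_{zz}-u_z^2=\tfrac12\{f,z\}$ is available. On $|z|=1$, compute $\operatorname{Im}(z^2\Phi)$ using the boundary condition. For $\lambda=1$ it vanishes identically, which is the conformally invariant case. For $\lambda\neq1$ it should reduce to a multiple of $(1-\lambda)$ times an expression in $u_\theta$ and $e^u$.

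Second, pair this boundary identity with a Pohozaev identity (multiplier $x\cdot\nabla u$) and with the quadratic form of the linearized problem tested against $w$. I expect this to give an inequality of the form
$$\int_{\mathbb D}|\nabla w|^2-2\int_{\mathbb D} e^{2u}w^2-\int_{\mathbb S^1}e^uw^2\ \ge\ c(\lambda)\int_{\mathbb S^1}w^2,$$
with $c(\lambda)>0$ exactly when $\lambda\le2$ and $\lambda\ne1$. Together with $Q(w)=0$ this would force $w\equiv0$.

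This spectral step is the main obstacle. The difficulty is to exclude the Möbius-type kernel directions, which exist precisely when $\lambda=1$, and to see why the threshold is $2$. My first attempt would be to decompose $w$ into Fourier modes $k\ge1$. Each mode satisfies an ODE in $r$ against the radial profile, and I would compare it with the explicit kernel $\frac{r^k(\ldots)}{1+R^2r^2}$ of the spherical linearization via a Sturm/Wronskian argument. The boundary sign condition should then reduce to $(\lambda-2)R^2-2R+\lambda$ relations, which are violated unless $\lambda=1$.

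\emph{Part $(ii)$, radial classification.} Once $u$ is radial, smoothness at the origin and the Liouville ODE force $u=\log\frac{2R}{1+R^2r^2}$ for some $R>0$. On $r=1$ we have
$$u_r=-\frac{2R^2}{1+R^2},\qquad e^u=\frac{2R}{1+R^2}.$$
Substituting into $u_r+\lambda=e^u$ gives $(\lambda-2)R^2-2R+\lambda=0$, which is \eqref{R-disc-range}. For $0<\lambda<2$ the product of the roots is $\lambda/(\lambda-2)<0$, so there is exactly one positive root. For $\lambda=2$ the equation is linear and gives $R=1$. This yields \eqref{R-class-large} with the stated uniqueness of $R$.
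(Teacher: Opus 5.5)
\textbf{There is a genuine gap: the radial-symmetry step, which carries all of $(ii)$, is not established.} Your part $(i)$ is the paper's flux argument. Your final step is also correct: ODE uniqueness for smooth radial solutions, the boundary relation $(\lambda-2)R^2-2R+\lambda=0$, and the root count via the sign of the product of the roots.

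The proposed radial-symmetry step does not go through, for three reasons.
\begin{itemize}
\item Since $w=\partial_\theta u$ solves the linearized problem, integration by parts gives $Q(w)=0$. So your displayed inequality, applied to $w$, is equivalent to $w=0$ on $\mathbb S^1$, and then $w\equiv0$ by unique continuation because $\partial_\nu w=e^uw$ vanishes too. For this particular $w$ the inequality is the conclusion itself. Read as a coercivity statement for the linearized operator at an arbitrary, unknown solution $u$, you give no derivation, and neither a Pohozaev identity nor your boundary identity visibly bounds $Q$ from below.
\item The Fourier-mode/Sturm comparison is circular. The modes of $w$ decouple into ODEs in $r$, and can be compared with the explicit kernel of the spherical linearization, only when $e^{2u}$ and $e^u$ are radial. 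That is, it works only once you already know $u$ is radial.
\item Nothing in your sketch explains why $\lambda\le2$ is the threshold. The paper itself only obtains a dichotomy for $2<\lambda\le 3$, so any correct argument must use $\lambda\le2$ in an essential way.
\end{itemize}

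\textbf{What is missing.} Your first step is in fact the paper's key identity: on $\mathbb S^1$ one has exactly $\operatorname{Im}\bigl(z^2(u_z^2-u_{zz})\bigr)=\frac{\lambda-1}{2}\,\partial_\theta u$, with no $e^u$ term. The missing idea is to exploit this holomorphically rather than through energy estimates.
\begin{itemize}
\item Set $q=u_z^2-u_{zz}$. Since $z^2q$ is holomorphic and vanishes to second order at $0$, the identity determines it: $z^2q=(\lambda-1)\,zh'$, where $h$ is holomorphic with $\operatorname{Re}h=\frac12\log(2e^{-u})$ on $\mathbb S^1$. For $\lambda\ne1$ this forces $h'(0)=0$.
\item The paper writes $e^{-u}=|Y|^2/2$ with $Y$ holomorphic, $\det(Y',Y)=1$ and $Y''=qY$. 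It factors $Y=gX$ with $g=e^h$ outer and $|X|=1$ on $\mathbb S^1$.
\item Pairing $z^2Y''=(\lambda-1)X\,zg'$ against $Y=gX$ in $H^2$ gives $\sum n(n-1)|A_n|^2=(\lambda-1)\sum n|b_n|^2$, where $A_n$ and $b_n$ are the Taylor coefficients of $Y$ and $g$.
\item Because $X$ is an inner multiplier, $\sum n(n-1)|A_n|^2\ge\sum n(n-1)|b_n|^2$. With $b_1=0$, the right side is at least $\sum n|b_n|^2$. This forces $g$ constant when $\lambda<2$, and an equality-case analysis does the same at $\lambda=2$.
\item Then $Y=A_0+A_1z$ is affine, and the $e^{i\theta}$-mode of the boundary condition gives $(1-\lambda)\langle A_1,A_0\rangle=0$. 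This is exactly radial symmetry, and it is where $\lambda\le2$ and $\lambda\ne1$ enter.
\end{itemize}
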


The coupling of the interior Liouville nonlinearity \(e^{2u}\) with
the nonlinear boundary term \(e^u\) produces a boundary structure
that is absent from both the scalar-flat problem and the
Neumann-type curvature problem. Consequently, in the supercritical
case \(\lambda>1\), several classical tools, including
Obata-type identities, maximum-principle arguments and the sphere
covering inequality \cite{GM2018}, are no
longer directly available for
\eqref{Onofri-E-L}.

In this paper, we develop a Hardy-Wronskian method that is different from
the method of moving planes (or methods), variational techniques and standard blow-up arguments.
We first lift the
positive function $e^{-u}$ to a normalized holomorphic \(\C^2\)-valued map.  The quotient of its
components is the classical developing map, while the corresponding scalar holomorphic potential is, up to a
constant factor, its Schwarzian derivative (see Remark \eqref{developing-map} below for details).  We then decompose the lift into a scalar outer factor and a
column-inner factor.
The boundary equation yields some
identities for the Taylor coefficients of these factors. For
$0<\lambda\leq2$, the coefficient relations force the
outer factor to be constant and hence the lift to be affine. For $0<\lambda\leq2$ with $\lambda\neq1$, it
follows that $u$ is radial and has the explicit form
\eqref{R-class-large}.

The factorization underlying our method falls within the framework of the scalar Hardy space theory
\cite{D1970,G2007} and  the Beurling--Lax theorem for vector-valued Hardy spaces
\cite{B1949,L1959,RR1985,NN2002}.  Similar connections between Liouville type equations and inner functions have been established by Kraus and Roth \cite{KR2008} in a different curvature setting.
Our method may also be applicable to constant-curvature Liouville-type equations, normalized holomorphic ODEs, and related geometric equations with spinorial representations.

\begin{remark}
The proof of Theorem \ref{radial-rigidity} requires the
nonvanishing of \(1-\lambda\) in the comparison of the first Fourier
modes. At the conformally invariant value \(\lambda=1\), this
comparison becomes degenerate and no longer yields the required
rigidity. This is consistent with the existence of nonradial solutions
generated by M\"obius transformations.
\end{remark}

We now state our second main result in the conformally invariant case $\lambda=1$.

\begin{theorem}\label{critical-classification}
Let $\lambda=1$ and $u\in C^\infty(\overline{\mathbb D})$ be a solution of
\eqref{Onofri-E-L}.
Then there exists $\psi\in\operatorname{Aut}(\mathbb D)$ such that
\begin{equation}\label{p-critical-classification}
u(z)
=
\log\frac{2R|\psi'(z)|}
{1+R^2|\psi(z)|^2},
\end{equation}
where \(R=\sqrt2-1\).
Conversely, every function of the form
\eqref{p-critical-classification} with
$\psi\in\operatorname{Aut}(\mathbb D)$ is a solution of
\eqref{Onofri-E-L}.

Moreover, the automorphism $\psi$ is unique up to a rotation: if $\psi_1,\psi_2\in\operatorname{Aut}(\mathbb D)$
determine the same function $u$, then
\[
\psi_2=e^{i\theta}\psi_1
\]
for some $\theta\in\mathbb R$.
\end{theorem}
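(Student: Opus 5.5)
The plan is to treat \eqref{Onofri-E-L} with $\lambda=1$ geometrically. The metric $g_u=e^{2u}|dz|^2$ has Gaussian curvature $K\equiv1$ in $\mathbb D$. By the formula $\kappa_{g_u}=1+(1-\lambda)e^{-u}$ from the introduction, its boundary geodesic curvature is $\kappa\equiv1$.

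\textbf{Step 1: converse direction and the developing map.} Since $\mathbb D$ is simply connected, Liouville's representation gives a locally univalent meromorphic $f$ on a neighbourhood of $\overline{\mathbb D}$ with
\[
u=\log\frac{2|f'|}{1+|f|^2}.
\]
Here $f$ is the developing map into the round sphere $\widehat{\mathbb C}$, unique up to post-composition by $PSU(2)$. In this language, the function in \eqref{p-critical-classification} is exactly $f=R\psi$. It is the pullback by $\psi$ of the spherical metric restricted to the disk $\{|w|<R\}$. That disk is a spherical cap whose boundary circle has geodesic curvature $(1-R^2)/(2R)$, and this equals $1$ precisely when $R^2+2R-1=0$, i.e.\ $R=\sqrt2-1$. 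Since $\psi$ is an isometry of $(\mathbb D,|dz|^2)$ on the boundary up to the conformal factor, this gives the converse statement. Alternatively, one can check directly that $u\mapsto u\circ\psi+\log|\psi'|$ preserves \eqref{Onofri-E-L} when $\lambda=1$.

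\textbf{Step 2: the boundary image lies on one circle.} The curve $f(\mathbb S^1)$ is an immersed closed curve on $\mathbb S^2$ with constant geodesic curvature $1$. Curves of constant geodesic curvature are locally arcs of round circles. By connectedness, the whole image lies on one circle $C$ of geodesic curvature $1$. After post-composing $f$ with a rotation in $PSU(2)$, which does not change $u$, we may assume $C=\{|w|=R\}$ with $R=\sqrt2-1$. The sign of $\kappa$, together with the orientation of $\nu$, forces $f$ to map a one-sided collar of $\mathbb S^1$ into the cap $\{|w|<R\}$.

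\textbf{Step 3: $g=f/R$ is a disk automorphism (main obstacle).} Set $g=f/R$. Then $g$ is meromorphic and a local homeomorphism on $\overline{\mathbb D}$, with $|g|=1$ on $\mathbb S^1$ and $|g|<1$ on an inner collar. The aim is to show $g\in\operatorname{Aut}(\mathbb D)$. I would first try a covering argument.
\begin{itemize}
\item Let $K$ be the connected component of $g^{-1}(\overline{\mathbb D})\cap\overline{\mathbb D}$ containing $\mathbb S^1$.
\item At an interior point $z_0\in\mathbb D$ with $|g(z_0)|=1$, local injectivity means nearby points have $|g|>1$. Hence $K$ is a compact domain whose boundary maps into $\mathbb S^1$.
\item Then $g\colon K\to\overline{\mathbb D}$ is a proper local homeomorphism, hence a finite covering. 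Since $\overline{\mathbb D}$ is simply connected, it is a homeomorphism.
\item The boundary $\partial K$ is therefore a single circle, which must be $\mathbb S^1$. So $K=\overline{\mathbb D}$, and $g$ is a bijective holomorphic self-map of $\mathbb D$.
\end{itemize}
As a fallback, reflection shows $g$ is a finite Blaschke quotient. Local univalence on $\overline{\mathbb D}$, together with degree counting via the argument principle on $\mathbb S^1$, should then rule out extra zeros and poles. With $g=\psi$, we get $f=R\psi$, which is \eqref{p-critical-classification}.

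\textbf{Step 4: uniqueness up to rotation.} Suppose $R\psi_1$ and $R\psi_2$ induce the same $u$. Then they differ by an isometry $T\in PSU(2)$ with $R\psi_2=T\circ R\psi_1$. This $T$ maps the cap $\{|w|<R\}$ onto itself, so it fixes the cap's centre $0$ and its antipode $\infty$. Hence $T$ is a rotation $w\mapsto e^{i\theta}w$, giving $\psi_2=e^{i\theta}\psi_1$.
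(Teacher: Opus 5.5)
Your proof is correct. At the decisive step, however, it takes a genuinely different route from the paper.

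\textbf{The paper's route.} The paper uses no topological argument. At $\lambda=1$ one has $a=0$, so the boundary identity $\operatorname{Im}\Phi=-a\dot F/(2F)$ of Lemma~\ref{boundary-projective} makes $\Phi=z^2q$ a real-valued holomorphic function vanishing at $0$. Hence $q=u_z^2-u_{zz}\equiv0$, the normalized lift is affine, and the developing map $f=y_1/y_2$ is a global M\"obius transformation. Injectivity of $f$, the fact that $f(\mathbb S^1)$ is a round circle, and the fact that $f(\mathbb D)$ is a cap then come for free. Only the radius, from $\cot\rho=1$, has to be computed.

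\textbf{Your route.} You get the circle from the local rigidity of curves of constant geodesic curvature on $\mathbb S^2$. You get global injectivity from the covering argument on the component $K$: a proper local homeomorphism onto the simply connected $\overline{\mathbb D}$ is a homeomorphism, so $\partial K=\mathbb S^1$ and $K=\overline{\mathbb D}$. This is sound, and $q\equiv0$ then follows only a posteriori.

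\textbf{Comparison.} The paper's argument is shorter once the lift is available, and it gives the extension of $f$ to $\widehat{\mathbb C}$ immediately. Yours is the classical geometric argument and is independent of the projective/Hardy machinery. It uses the boundary condition only through ``Gaussian curvature $1$ with boundary geodesic curvature $1$''. The price is that you must verify local injectivity up to $\mathbb S^1$ and the manifold-with-boundary structure of $K$.

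\textbf{Points to tighten.}
\begin{enumerate}
\item A priori $f$ is only smooth up to $\mathbb S^1$, since $u_{zz}-u_z^2\in A^\infty(\mathbb D)$; it is not yet defined on a neighbourhood of $\overline{\mathbb D}$. This suffices, because a map that is $C^1$ up to the boundary with nonvanishing spherical derivative is injective on small relative neighbourhoods. Alternatively, extend $f$ by Schwarz reflection across $C$ after Step~2.
\item At an interior point with $|g(z_0)|=1$, the correct statement is that $g^{-1}(\overline{\mathbb D})$ is locally a closed half-disc. Only some nearby points have $|g|>1$.
\item For the converse, the relevant isometry is $R\psi$ from $(\overline{\mathbb D},e^{2u}|dz|^2)$ onto the closed spherical cap $\{|w|\le R\}$. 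That gives curvature $1$ and $e^{-u}(1+\partial_\nu u)=1$ on $\mathbb S^1$.
\end{enumerate}
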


Different from Theorem \ref{radial-rigidity}, we first construct a Hardy-spinor global developing map. In the case $\lambda=1$, the vanishing of the projective differential forces the developing map to be a M\"{o}bius transformation. The boundary condition then determines its image as a spherical disk, which, after normalization, yields the disk automorphism classification.

\begin{remark}
The representation \eqref{p-critical-classification} is equivalent to the standard spherical bubble representation. In particular, all solutions in the conformally invariant case \(\lambda=1\) can be written explicitly as
\begin{align*}
u=\log
\frac{2\mu}{|x-x_0|^2+\mu^2}
\end{align*}
for some $\zeta\in\mathbb{B}^2$, where $x_0=\frac{2\zeta}{|\zeta|^2-1+\sqrt{2(1+|\zeta|^4)}}$ and $\mu=\frac{1-|\zeta|^2}{|\zeta|^2-1+\sqrt{2(1+|\zeta|^4)}}$.
\end{remark}

For $2<\lambda\leq3$, we establish the following dichotomy for all solutions.

\begin{theorem}\label{pde-quadratic}
Let $2<\lambda\leq3$  and
$u\in C^\infty(\overline{\mathbb D})$ be a solution of \eqref{Onofri-E-L}.  Then exactly one of the following holds.

$(i)$ \(u\) is radially symmetric and has the form \eqref{R-class-large}, where \(R>0\) satisfies \eqref{R-disc-range}.

$(ii)$ \(u\) is nonradial and
\begin{equation*}\label{pde-q0-nonzero}
 u_z(0)^2-u_{zz}(0)\ne0.
\end{equation*}

Moreover, if \(1+\sqrt2<\lambda\leq3\), every solution, if it exists, satisfies alternative \textup{(ii)}.
\end{theorem}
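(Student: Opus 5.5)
The quantity in alternative (ii) is controlled by the holomorphic potential of the Liouville equation, so the plan is to reduce everything to the implication $Q(0)=0\Rightarrow u$ radial, where
\[
Q:=u_{zz}-u_z^2 .
\]
Since $-\Delta u=e^{2u}$ reads $u_{z\bar z}=-\tfrac14 e^{2u}$, we get
\[
\partial_{\bar z}Q=u_{zz\bar z}-2u_zu_{z\bar z}=-\tfrac14\partial_z(e^{2u})+\tfrac12u_ze^{2u}=0 .
\]
So $Q$ is holomorphic in $\mathbb D$ and smooth up to $\mathbb S^1$; up to a constant factor it is the Schwarzian of the developing map. Alternative (ii) is precisely $Q(0)\neq0$ together with nonradiality.

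\emph{Step 1 (radial solutions).} If $u$ is radial, then $u_z(0)=0$. Also $u_{zz}(0)=\tfrac14(u_{x_1x_1}-u_{x_2x_2}-2iu_{x_1x_2})(0)=0$, since the Hessian of a smooth radial function at the origin is a multiple of the identity. Hence $Q(0)=0$, so (i) and (ii) are mutually exclusive.

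Next, a smooth radial solution of $-\Delta u=e^{2u}$ on $\mathbb D$ is determined by $u(0)$ through ODE uniqueness. Therefore it coincides with the restriction of the spherical bubble $\log\frac{2R}{1+R^2|z|^2}$, where $R=e^{u(0)}/2$. Inserting this into the Robin condition gives $u_r(1)=-\frac{2R^2}{1+R^2}$ and $e^{u}=\frac{2R}{1+R^2}$ on $\mathbb S^1$, which is exactly $(\lambda-2)R^2-2R+\lambda=0$, i.e. \eqref{R-disc-range}.

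For $\lambda>2$ the discriminant condition $4-4\lambda(\lambda-2)\ge0$ is equivalent to $\lambda\le1+\sqrt2$. Hence for $1+\sqrt2<\lambda\le3$ no radial solution exists. That yields the final sentence of the theorem once exhaustiveness is proved.

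\emph{Step 2 (main step: $Q(0)=0$ forces radial symmetry).} Here I would rerun the Hardy--Wronskian machinery from the proof of Theorem \ref{radial-rigidity}.
\begin{itemize}
\item Write $e^{-u}=c\,|F|^2$ with $F=(f_1,f_2)$ a normalized holomorphic lift. This means $F''+QF=0$ up to normalization and the Wronskian of $(f_1,f_2)$ is constant.
\item Factor $F$ as a scalar outer function times a column-inner factor.
\item Expand the Robin identity $\partial_\nu u+\lambda=e^u$ on $\mathbb S^1$ in Fourier modes to obtain the Taylor-coefficient identities.
\end{itemize}
I expect the identities for the zeroth, first and second modes to contain $Q(0)$ explicitly, with a coefficient depending on $\lambda$. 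In the range $\lambda\le2$ the sign of these coefficients gave rigidity unconditionally. For $2<\lambda\le3$ I would impose $Q(0)=0$ to kill the offending term. Then I would check that the remaining coefficient inequality, a positivity statement for an expression like $(3-\lambda)\sum_{k\ge1}k|a_k|^2$ or similar in the Taylor coefficients $a_k$ of the outer factor, still forces the outer factor to be constant. This is the step where $\lambda\le3$ should enter.

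With the outer factor constant, the lift is affine and $u$ is a restricted bubble. Finally, $\lambda\neq1$ and the first-mode comparison (the Remark after Theorem \ref{radial-rigidity}) force the bubble to be centered at $0$, so $u$ is radial.

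\emph{Conclusion and main obstacle.} Combining the two steps: either $Q(0)=0$, whence $u$ is radial and of the form \eqref{R-class-large} by Step 1, or $Q(0)\neq0$, whence $u$ is nonradial by Step 1. This is the claimed dichotomy, and the case $1+\sqrt2<\lambda\le3$ falls entirely into (ii). The main obstacle is Step 2: I need to pin down exactly how $Q(0)$ enters the low-mode coefficient identities, and verify that the sign argument survives up to $\lambda=3$ but no further.
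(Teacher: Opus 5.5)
\textbf{There is a genuine gap.} Your Step 1 and the overall logic are sound and match the paper: radial implies $Q(0)=0$, proving $Q(0)=0\Rightarrow$ radial gives the dichotomy, and the discriminant handles $\lambda>1+\sqrt2$. Your ODE-uniqueness argument for radial solutions is a fine substitute for the paper's $p_{zz}=0$ argument.

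The problem is Step 2, which is the entire substance of the theorem. It is only announced, and $Q(0)$ does not enter the way you expect.

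\emph{The missing link.} What you need is the exact identity $z^2q=a\,zh'$ in $\mathbb D$, where $q=-Q$, $a=\lambda-1$, and $h=\log g$ for the outer factor $g$ of the lift $Y$. To get it, differentiate $\langle zY',Y\rangle$ along $\mathbb S^1$ and use the Robin condition; this gives $\operatorname{Im}(z^2q)=-\frac a2\,\partial_\theta\log|Y|^2=\operatorname{Im}(a\,zh')$ on $\mathbb S^1$. The maximum principle, plus vanishing at $0$, upgrades this to equality in $\mathbb D$. Write $Y=\sum A_nz^n$ and $g=\sum b_nz^n$. Comparing the $z$ and $z^2$ coefficients gives $b_1=0$ and $Q(0)=-2ab_2/b_0$, so $Q(0)=0$ if and only if $b_2=0$.

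\emph{The missing inequality.} You also need an actual inequality on the $b_n$.
\begin{itemize}
\item Multiplication by the column-inner factor $X=Y/g$ is an $H^2$-isometry, so partial sums of $|A_n|^2$ are dominated by those of $|b_n|^2$.
\item From $z^2Y''=aX\,zg'$ you get the moment identity $\sum n(n-1)|A_n|^2=a\sum n|b_n|^2$.
\item Abel summation with the nondecreasing weight $n(n-1)$ then gives $\sum_{n\ge2}n(n-\lambda)|b_n|^2\le0$.
\end{itemize}
The $n=2$ term is the offending one. Once $b_2=0$ you do get your guessed $(3-\lambda)\sum_{n\ge3}n|b_n|^2\le0$. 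Hence $g$ is constant, $q\equiv0$, $Y$ is affine, and the first-mode comparison gives radiality. But this works only for $\lambda<3$.

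\emph{The endpoint $\lambda=3$.} Here the sign argument does not survive. The coefficient of $|b_3|^2$ vanishes, both in this inequality and in the second-moment version $\sum n^2\bigl((n-1)^2-a^2\bigr)|b_n|^2\le0$ used in the paper. So you only obtain $g=b_0+b_3z^3$, and excluding $b_3\neq0$ needs a separate idea. The paper's argument runs as follows.
\begin{itemize}
\item For such $g$ one has $z^2g''=2zg'$. With $a=2$, the exact equation $z^2(gX)''=2zg'X$ therefore reduces to $(g^2X')'=0$.
\item Hence $X=U+VH$ with $H=\int_0^z g^{-2}$ and $\det(V,U)=1$.
\item The symmetry $g(\omega z)=g(z)$, $\omega=e^{2\pi i/3}$, gives $H(\omega z)=\omega H(z)$.
\item Averaging $|X(\omega^k z)|^2=1$ over $k=0,1,2$ shows $|H|$ is constant on $\mathbb S^1$.
\item Then $H/\rho$ is a disk automorphism fixing $0$ with positive derivative, i.e.\ the identity. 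This forces $g$ to be constant.
\end{itemize}
Without this endpoint argument, your proposal covers at most $2<\lambda<3$.
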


As an immediate consequence of the radial classification,
we obtain the following result for all \(\lambda>0\).

\begin{corollary}\label{radial-all-lambda}
Let $\lambda>0$.  Then the following assertions hold.

$(i)$ If $0<\lambda<2$,  problem \eqref{Onofri-E-L}
admits a unique radial solution of the form \eqref{R-class-large}, where
\[
R=\frac{\lambda}
{1+\sqrt{1+2\lambda-\lambda^2}}.
\]

$(ii)$ If $\lambda=2$, problem \eqref{Onofri-E-L}
admits a unique radial solution of the form \eqref{R-class-large}, where $R=1$.

$(iii)$ If $2<\lambda<1+\sqrt2$,  problem \eqref{Onofri-E-L}
admits exactly two radial solutions of the form \eqref{R-class-large}, where
\begin{equation*}\label{eq-Rpm}
 R_\pm
 =
 \frac{1\pm\sqrt{1+2\lambda-\lambda^2}}
 {\lambda-2}.
\end{equation*}

$(iv)$ If $\lambda=1+\sqrt2$, problem \eqref{Onofri-E-L}
admits a unique radial solution of the form \eqref{R-class-large}, where
\[
 R=1+\sqrt2.
\]

$(v)$ If $\lambda>1+\sqrt2$, there is no radial solution.
\end{corollary}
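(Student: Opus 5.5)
The plan is to reduce everything to an explicit one-parameter family of radial solutions of the interior equation and then impose the boundary condition, which becomes a quadratic equation in the parameter. This argument does not need the Hardy--Wronskian machinery; it only uses the radial ODE.

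\emph{Step 1 (radial solutions of the interior equation).} Let $u(z)=v(r)$, $r=|z|$, be a smooth radial solution of $-\Delta u=e^{2u}$ in $\mathbb D$. Then $(rv')'=-re^{2v}$ with $v'(0)=0$, so $v$ solves the integral equation
\[
v(r)=a-\int_0^r\frac1s\int_0^s te^{2v(t)}\,dt\,ds,\qquad a=v(0).
\]
A standard contraction argument on $C([0,\delta])$ shows that this integral equation has a unique local solution, since the kernel is bounded by $r^2/4$ times the Lipschitz constant of $e^{2v}$. Ordinary ODE uniqueness then applies on $(0,1]$, so $v$ is determined by $a$.

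For every $R>0$ the function $\log\frac{2R}{1+R^2|z|^2}$ is a smooth radial solution, with value $\log(2R)$ at the origin. Choosing $R=e^{a}/2$, uniqueness shows that every smooth radial solution has the form \eqref{R-class-large} for a unique $R>0$.

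\emph{Step 2 (boundary condition).} For $u$ of the form \eqref{R-class-large}, at $r=1$ we have
\[
\partial_\nu u=u_r(1)=-\frac{2R^2}{1+R^2},\qquad e^{u}=\frac{2R}{1+R^2}.
\]
The Robin condition $\partial_\nu u+\lambda=e^u$ therefore becomes $\lambda(1+R^2)=2R+2R^2$, which is exactly \eqref{R-disc-range}. Hence the radial solutions of \eqref{Onofri-E-L} correspond bijectively to the positive roots of $(\lambda-2)R^2-2R+\lambda=0$.

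\emph{Step 3 (counting positive roots).} For $\lambda=2$ the equation is linear, $-2R+2=0$, so $R=1$; this gives (ii).

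For $\lambda\neq2$, the discriminant is $4(1+2\lambda-\lambda^2)$. With $\lambda>0$ it is nonnegative if and only if $\lambda\le1+\sqrt2$.

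\begin{itemize}
\item If $0<\lambda<2$, the product of the roots is $\lambda/(\lambda-2)<0$, so exactly one root is positive. It is $R=(1-\sqrt{1+2\lambda-\lambda^2})/(\lambda-2)$. Rationalizing gives $R=\lambda/(1+\sqrt{1+2\lambda-\lambda^2})$, which is (i).
\item If $2<\lambda<1+\sqrt2$, the discriminant is positive. The sum $2/(\lambda-2)$ and the product $\lambda/(\lambda-2)$ are both positive, so there are two distinct positive roots $R_\pm$, which is (iii).
\item At $\lambda=1+\sqrt2$ there is a double root $R=1/(\lambda-2)=1/(\sqrt2-1)=1+\sqrt2$, which is (iv).
\item For $\lambda>1+\sqrt2$ there are no real roots, hence no radial solutions, which is (v).
\end{itemize}

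As a consistency check, at $\lambda=1$ the formula in (i) gives $R=\sqrt2-1$, in agreement with Theorem \ref{critical-classification}.

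\emph{Main obstacle.} The only non-routine point is the uniqueness of the singular radial initial value problem at $r=0$ in Step 1. The integral reformulation above handles it; everything after that is elementary algebra.
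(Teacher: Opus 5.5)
Your proof is correct. The counting in Step 3 is the same discriminant and sign computation the paper does, and you carry it out in more detail: the paper treats only (i) explicitly and says ``similarly'' for (ii)--(v).

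The genuine difference is the structural step showing that every smooth radial solution of $-\Delta u=e^{2u}$ has the form \eqref{R-class-large}. The paper proves this inside the proof of Theorem \ref{pde-quadratic}, using the holomorphic projective coefficient $q=u_z^2-u_{zz}=-\phi$ from Lemma \ref{p-identities}:
\begin{itemize}
\item Rotational invariance gives $q(e^{i\alpha}z)=e^{-2i\alpha}q(z)$, so every Taylor coefficient of $q$ vanishes and $q\equiv0$.
\item Then $p=e^{-u}$ satisfies $p_{zz}=0$. Writing $p=P(|z|^2)$, this reads $P''(|z|^2)\bar z^2=0$, so $p=\alpha+\beta|z|^2$.
\item The identity $pp_{z\bar z}-p_zp_{\bar z}=\frac14$ then forces $\alpha\beta=\frac14$.
\end{itemize}
You instead use uniqueness for the singular radial initial value problem at $r=0$, and match against the explicit bubble family through $u(0)=\log(2R)$.

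The paper's route is purely algebraic once $q$ is known to be holomorphic. It needs no ODE theory, and it derives the bubble profile rather than requiring it to be known in advance. Your route is elementary and independent of the complex-analytic identities. Its cost is the routine uniqueness argument at the singular point. There you should say explicitly that two given solutions are continuous, hence bounded, on $[0,1]$. The estimate $|v-w|(r)\le L\frac{r^2}{4}\sup_{[0,r]}|v-w|$ then holds with a finite Lipschitz constant $L$, without first restricting to a ball.
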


The rest of this paper is organized as follows. In Section 2, we first construct a normalized
holomorphic lift, then establish the corresponding
holomorphic rigidity result. Finally, we derive the classification result in the case $0<\lambda\leq2$, $\lambda\neq1$. In Section 3, we prove Theorem \ref{critical-classification} by constructing a global developing map and analyzing its projective differential. Section 4 is devoted to the proof of Theorem \ref{pde-quadratic} and Corollary \ref{radial-all-lambda}.

\section{Classification of solutions for $\lambda\leq 2$, $\lambda\neq1$}

In this section, we give the proof of Theorem \ref{radial-rigidity}. We first construct a normalized holomorphic lift, which is equivalent to the equation \eqref{Onofri-E-L}. Then we prove the holomorphic rigidity result. Finally, we give a classification of solutions in the range $\lambda\leq2$, $\lambda\neq1$.

We now present our results and introduce some notations that will be used in the following. Using Euler's formula, we write
$$ z=re^{i\theta}\ \ \mbox{with}\ r\geq0,\quad\ r=|z|=\sqrt{x_1^2+x_2^2}.$$
Define the complex derivatives
\[
 \partial_z=\frac12(\partial_{x_1}-i\partial_{x_2}),
 \qquad
 \partial_{\bar z}=\frac12(\partial_{x_1}+i\partial_{x_2}),
 \qquad
 \Delta=4\partial_z\partial_{\bar z},
\]
and let $\partial_r$ denote the radial derivative in polar coordinates, i.e.,
\[
\partial_r=e^{i\theta}\partial_z+e^{-i\theta}\partial_{\bar{z}},
\]
where $\bar{z}$ is the conjugate of $z$.
The Hermitian product is
\[
\langle \xi, \eta \rangle=\xi_1\overline{\eta_1}+\xi_2\overline{\eta_2},
\]
where vectors $\xi,\eta\in \C^2$.

Set
$$\mathcal{O} (\mathbb D;\mathbb C^2)=\{f: \mathbb D\rightarrow\mathbb C^2\mid f\ is\ holomorphic\ in\ \mathbb D\}$$
and
$$A^{\infty}(\mathbb D;\mathbb C^2)=\mathcal{O} (\mathbb D;\mathbb C^2)\cap C^{\infty}(\overline{\mathbb D};\mathbb C^2).$$
Set $a=\lambda-1$ and
\begin{equation*}\label{new-def}
p=e^{-u},\quad \ Y=(y_1,y_2)^{T},
\end{equation*}
$$\phi=u_{zz}-u_z^2.$$
For simplicity, we write $ u_z=\partial_zu$ and $ u_{\bar{z}}=\partial_{\bar{z}}u$ in the following.

\subsection{The normalized holomorphic lift}

In this subsection, we construct a normalized holomorphic lift. This lift is equivalent to the
equation \eqref{Onofri-E-L}.

%Define
%$$(\kappa_{\bar{g}_\phi}|_{\Gamma_D})_+=\max \{\kappa_{\bar{g}_\phi}|_{\Gamma_D},0\},\ \quad\ (\kappa_{\bar{g}_\phi}|_{\Gamma_D})_-=\max \{-\kappa_{\bar{g}_\phi}|_{\Gamma_D},0\}.$$

The equation \eqref{Onofri-E-L} yields some identities as follows.

\begin{lemma}\label{p-identities}
Let $u\in C^\infty(\overline{\mathbb D})$  be the solution of \eqref{Onofri-E-L}.  Then
\begin{equation}\label{p-diff-identity}
 pp_{z\bar z}-p_zp_{\bar z}=\frac14,
\end{equation}
\begin{equation}\label{diff-p-zz}
 p_{zz}+\phi p=0
 \quad \mbox{and}\quad
 \phi\in A^{\infty}(\mathbb D).
\end{equation}
\end{lemma}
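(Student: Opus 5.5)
This is a direct computation using only the interior equation $-\Delta u=e^{2u}$, written as $4u_{z\bar z}=-e^{2u}$, i.e. $u_{z\bar z}=-\tfrac14 e^{2u}$; the boundary condition plays no role. With $p=e^{-u}$ we have
\[
p_z=-u_zp,\qquad p_{\bar z}=-u_{\bar z}p,\qquad p_{z\bar z}=(u_zu_{\bar z}-u_{z\bar z})p .
\]
Hence
\[
pp_{z\bar z}-p_zp_{\bar z}=p^2(u_zu_{\bar z}-u_{z\bar z})-p^2u_zu_{\bar z}=-p^2u_{z\bar z}=\tfrac14 e^{-2u}e^{2u}=\tfrac14,
\]
which is \eqref{p-diff-identity}.

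For the second identity, differentiate $p_z=-u_zp$ once more in $z$:
\[
p_{zz}=-u_{zz}p+u_z^2p=-(u_{zz}-u_z^2)p=-\phi p,
\]
so $p_{zz}+\phi p=0$.

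It remains to show $\phi\in A^\infty(\mathbb D)$. Smoothness up to the boundary is immediate: $u\in C^\infty(\overline{\mathbb D})$ and $\phi$ is a polynomial in derivatives of $u$. For holomorphy, compute $\partial_{\bar z}\phi=u_{zz\bar z}-2u_zu_{z\bar z}$. Differentiating $u_{z\bar z}=-\tfrac14e^{2u}$ in $z$ gives $u_{z\bar z z}=-\tfrac12u_ze^{2u}$. Therefore
\[
\partial_{\bar z}\phi=-\tfrac12u_ze^{2u}-2u_z\bigl(-\tfrac14e^{2u}\bigr)=0
\]
in $\mathbb D$, so $\phi$ is holomorphic there.

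An alternative route is to differentiate \eqref{p-diff-identity} in $z$, which gives $pp_{zz\bar z}-p_{zz}p_{\bar z}=0$, i.e. $\partial_{\bar z}(p_{zz}/p)=0$. This yields holomorphy of $-\phi=p_{zz}/p$ directly and is valid because $p>0$.

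There is no real obstacle here. The only care needed is in the sign conventions ($\Delta=4\partial_z\partial_{\bar z}$) and in noting that $p>0$, so dividing by $p$ is legitimate.
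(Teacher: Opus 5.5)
Your proposal is correct and follows essentially the same route as the paper: you compute $p_z,p_{\bar z},p_{z\bar z}$ from $u_{z\bar z}=-\tfrac14e^{2u}$, obtain $p_{zz}=-\phi p$ by one further $z$-derivative, and verify $\partial_{\bar z}\phi=u_{zz\bar z}-2u_zu_{z\bar z}=0$. Your alternative observation that differentiating \eqref{p-diff-identity} in $z$ gives $\partial_{\bar z}(p_{zz}/p)=0$ is a valid shortcut to the holomorphy of $\phi$ that the paper does not use.
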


\begin{proof}
Using \eqref{Onofri-E-L} yields
\[
 u_{z\bar z}=-\frac14e^{2u}.
\]
Then we have
\[
 p_z=-u_zp,\quad p_{\bar z} =-u_{\bar z} p
\]
and
\[
 p_{z\bar z}=\partial_{\bar{z}}(p_z)
 =\partial_{\bar z}(-u_zp)
 =-u_{z\bar z}p-u_zp_{\bar z}
 =(-u_{z\bar z}+u_zu_{\bar z})p.
\]
Therefore,
\[
 \begin{aligned}
 pp_{z\bar z}-p_zp_{\bar z}
 &=p^2(-u_{z\bar z}+u_zu_{\bar z})-u_zu_{\bar z}p^2=-u_{z\bar z}p^2=\frac14,
 \end{aligned}
\]
which prove \eqref{p-diff-identity}.

A direct calculation gives
\[
 p_{zz}=\partial_{z}(p_z)=\partial_z(-u_zp)
 =-u_{zz}p-u_zp_z
 =(-u_{zz}+u_z^2)p=-\phi p
\]
and
\[
 \begin{aligned}
 \partial_{\bar z}\phi
 &=u_{zz\bar z}-2u_zu_{z\bar z}\\
 &=\partial_z\!\left(-\frac14e^{2u}\right)
   -2u_z\!\left(-\frac14e^{2u}\right)\\
 &=-\frac12e^{2u}u_z+\frac12e^{2u}u_z=0.
 \end{aligned}
\]
Thus \(\phi\) is holomorphic. Moreover, it follows from $u\in C^\infty(\overline{\mathbb D})$ that $\phi\in A^{\infty}(\mathbb D)$.
\end{proof}

The next proposition is normalized holomorphic lift. The main ingredient we use is the standard existence of a fundamental system for a linear holomorphic ODE on a simply connected domain, see \cite{Hille1976}.
\begin{proposition}\label{normalized-lift}
Let $u\in C^\infty(\overline{\mathbb D})$ be a solution of \eqref{Onofri-E-L}. Then there exists $Y\in A^{\infty}(\mathbb D)$
such that
\begin{equation}\label{lift-normalization}
 \det(Y',Y)=1,
 \qquad
 p=\frac{|Y|^2}{2}.
\end{equation}
The lift $Y$ is unique up to left multiplication by a constant matrix in \(SU(2)\).
\end{proposition}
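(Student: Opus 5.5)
The plan is to realize $p=e^{-u}$ as a Hermitian form in a fundamental system of the linear holomorphic ODE
\[
y''+\phi\,y=0 \quad\text{in } \mathbb D,
\]
whose coefficient $\phi\in A^\infty(\mathbb D)$ and which $p$ satisfies in $z$ by Lemma \ref{p-identities}. Since $\mathbb D$ is simply connected, \cite{Hille1976} gives holomorphic solutions $y_1,y_2$ with prescribed initial data at $0$. I choose them so that $\det(Y',Y)=y_1'y_2-y_2'y_1=1$ at $0$. This Wronskian is then constant, because its derivative is $\det(Y'',Y)=-\phi\det(Y,Y)=0$. For boundary regularity, $\phi$ and all its derivatives are bounded on $\overline{\mathbb D}$. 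Integrating the ODE along radii gives bounds on $Y,Y'$, and differentiating the ODE repeatedly bounds all higher derivatives. Hence $Y\in A^\infty(\mathbb D;\C^2)$.

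Next I represent $p$. Set $c_j=p_zy_j-p\,y_j'$ for $j=1,2$. Then
\[
\partial_z c_j=p_{zz}y_j-p\,y_j''=-\phi p y_j+\phi p y_j=0,
\]
so each $c_j$ is antiholomorphic. The normalization $\det(Y',Y)=1$ gives $y_1c_2-y_2c_1=p$. Thus $p=Y^T\overline{G}$ for some holomorphic $G$. Now use that $p$ is real: $Y^T\overline G=G^T\overline Y$. Applying $\partial_z$ gives $Y'^T\overline G=G'^T\overline Y$. The matrix with rows $Y^T$ and $Y'^T$ is invertible, since its determinant is $\pm1$. Solving,
\[
\overline G=\begin{pmatrix}Y^T\\ Y'^T\end{pmatrix}^{-1}\begin{pmatrix}G^T\\ G'^T\end{pmatrix}\overline Y=:N(z)\overline Y,
\]
where $N$ is holomorphic. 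Applying $\partial_z$ to $\overline G=N\overline Y$ gives $N'(z)\overline Y(\bar z)=0$ for all $z$. The values $\overline{Y}$ span $\C^2$, because $Y$ and $Y'$ are independent. Hence $N'=0$, and so $p=Y^TH\overline Y$ for a constant matrix $H=N^T$. Reality of $p$, combined with the linear independence of the functions $y_i\overline{y_j}$ (polarize: a form $Y^TM\overline Y$ vanishing identically forces $M=0$), shows that $H$ is Hermitian. I expect this step, passing from the antiholomorphic coefficients $c_j$ to a constant Hermitian $H$, to be the main point requiring care.

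Then I insert $p=Y^TH\overline Y$ into \eqref{p-diff-identity}. The $2\times2$ Lagrange-type identity
\[
(Y^TH\overline Y)(Y'^TH\overline{Y'})-|Y'^TH\overline Y|^2=\det H\,|\det(Y',Y)|^2
\]
gives $\det H=\tfrac14$. Since $p>0$, $H$ is positive definite. Write $H=\tfrac12P^*P$ with $\det P=1$, for instance $P=\sqrt{2}\,H^{1/2}$. Replacing $Y$ by $\overline{P}Y$ (in the convention $p=Y^TH\overline Y$) keeps $\det(Y',Y)=1$ and yields $p=|Y|^2/2$, which is \eqref{lift-normalization}.

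For uniqueness, let $\tilde Y$ be another lift. Constancy of $\det(\tilde Y',\tilde Y)=1$ gives $\det(\tilde Y'',\tilde Y)=0$, so $\tilde Y''=-\psi\tilde Y$ for a holomorphic $\psi$. Then $p_{zz}=\tfrac12\tilde Y''^T\overline{\tilde Y}=-\psi p$, and comparing with Lemma \ref{p-identities} forces $\psi=\phi$. Thus both $\tilde Y$ and $Y$ are fundamental systems of the same ODE, so $\tilde Y=QY$ with $Q$ constant and $\det Q=1$. Finally, $|QY|^2=|Y|^2$ together with the same polarization argument gives $Q^*Q=I$, so $Q\in SU(2)$.
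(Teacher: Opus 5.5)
Your proof is correct and follows the paper's argument in all essentials: a unit-Wronskian fundamental system, the antiholomorphic coefficients $c_j$, a constant Hermitian $H$ with $\det H=\tfrac14$, the square root $\sqrt2H^{1/2}$, and uniqueness via the common ODE plus polarization. The only deviations are that you show the coefficients are constant combinations of $\overline{y_j}$ by differentiating the reality identity rather than via the conjugate equation $p_{\bar z\bar z}+\bar\phi p=0$, and that you actually justify $Y\in A^\infty(\mathbb D;\C^2)$, which the paper leaves implicit.

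Two small corrections. First, $\overline G=N\overline Y$ gives $p=Y^TN\overline Y$, so $H=N$ rather than $N^T$. Second, $N'=0$ is most cleanly obtained by applying $\partial_{\bar z}$ to $N'\overline Y=0$ and using the pointwise independence of $\overline Y$ and $\overline{Y'}$. The bare fact that the values of $\overline Y$ span $\C^2$ would need a polarization argument, since $N'$ also varies with $z$.
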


\begin{proof}
Let \(y_1,y_2\) be a fundamental system for
\begin{equation}\label{scalar-ode-T}
 y''+\phi y=0\quad \mbox{in}\ \mathbb D.
\end{equation}
Note that the Wronskian
$$W(y_1,y_2)=\det(Y',Y)=y'_1y_2-y'_2y_1.$$
Equation \eqref{scalar-ode-T} gives
$$W_z(y_1,y_2)=y''_1y_2-y''_2y_1=0,$$
so the Wronskian is constant. Without loss of generality, we may normalize the fundamental system such
that, for
\[
 Y_0=(y_1,y_2)^T,
\]
one has
\begin{equation}\label{Y0-Wronskian}
 \det(Y_0',Y_0)=1.
\end{equation}
Define
\[
 M(z)=
 \begin{pmatrix}
 y_1(z)&y_2(z)\\
 y_1'(z)&y_2'(z)
 \end{pmatrix}.
\]
From \eqref{Y0-Wronskian}, we immediately see that \(\det M=-1\). Hence \(M\) is invertible in \(\mathbb D\).

Define
\[
 V=\binom{p}{p_z},
 \qquad
 A(z)=
 \begin{pmatrix}
 0&1\\
 -\phi(z)&0
 \end{pmatrix}.
\]
From Lemma \ref{p-identities}, it follows that
$$V_z=\binom{p_z}{p_{zz}}=\binom{p_z}{-\phi p}=AV$$
and
$$M_z=\begin{pmatrix}
 y'_1(z)&y'_2(z)\\
 y_1''(z)&y_2''(z)
  \end{pmatrix}=\begin{pmatrix}
 y'_1(z)&y'_2(z)\\
 -\phi y_1(z)&-\phi y_2(z)
  \end{pmatrix}= AM.$$
Set
\[
 c=\binom{c_1}{c_2}=M^{-1}V.
\]
Then we have
\[
 c_z
 =-M^{-1}M_zM^{-1}V+M^{-1}V_z
 =-M^{-1}AV+M^{-1}AV=0.
\]
Thus \(c\) is anti-holomorphic, and
\begin{equation}\label{p-y-c}
 p(z,\bar z)=y_1(z)c_1(\bar z)+y_2(z)c_2(\bar z).
\end{equation}

Since \(p\) is real-valued function, the conjugate of \eqref{diff-p-zz} is
\begin{equation}\label{conjugate-p-ode}
 p_{\bar z\bar z}+\bar{\phi}p=0.
\end{equation}
Substituting \eqref{p-y-c} into \eqref{conjugate-p-ode} gives
\begin{equation}\label{sys-a-ide}
 y_1(z)a_1(\bar z)+y_2(z)a_2(\bar z)=0,
\end{equation}
where
\[
 a_j(\bar z):=c_j''(\bar z)+\bar{\phi}\,c_j(\bar z)\quad\ j=1,2.
\]
Each \(a_j\) is anti-holomorphic.  Differentiating \eqref{sys-a-ide} with respect to
\(z\) yields
\[
 y_1'(z)a_1(\bar z)+y_2'(z)a_2(\bar z)=0.
\]
The invertibility of \(M(z)\) then implies that
\[
 a_1=a_2=0.
\]
Hence each \(c_j\) solves
$$c_j''(\bar z)+\bar{\phi}\,c_j(\bar z)=0,\quad\ j=1,2.$$
 and is a constant linear combination of
\(\overline{y_1}\) and \(\overline{y_2}\).  Consequently, there is a constant matrix
\(H\in\C^{2\times2}\) such that
\begin{equation}\label{eq-p-H}
 p=Y_0^*HY_0,
\end{equation}
where $Y_0^*$ denotes the conjugate transpose of $Y_0$.

We next show that \(H\) is Hermitian.  The reality of \(p\) in \eqref{eq-p-H} gives
\begin{equation*}\label{Her-Y-H}
 Y_0^*(H-H^*)Y_0=0.
\end{equation*}
Differentiating this identity with respect to \(z\) and \(\bar z\), we have
\[
 Y_0^*(H-H^*)Y_0'=0,
 \qquad
 Y_0'^*(H-H^*)Y_0=0,
 \qquad
 Y_0'^*(H-H^*)Y_0'=0.
\]
It follows that
\[
 \begin{pmatrix}Y_0&Y_0'\end{pmatrix}^*
(H-H^*)
 \begin{pmatrix}Y_0&Y_0'\end{pmatrix}=0.
\]
By \eqref{Y0-Wronskian}, the matrix \((Y_0,Y_0')\) is invertible, and hence
\(H=H^*\).

Differentiating \eqref{eq-p-H} gives the matrix identity
\begin{equation}\label{matrix-H-identity}
 \begin{pmatrix}
 p&p_z\\
 p_{\bar z}&p_{z\bar z}
 \end{pmatrix}
 =
 \begin{pmatrix}Y_0&Y_0'\end{pmatrix}^*
 H
 \begin{pmatrix}Y_0&Y_0'\end{pmatrix}.
\end{equation}
Using Lemma \ref{p-identities} and $\det(Y_0,Y_0')=1$, we derive
\begin{equation*}\label{eq-det-H}
 \det H=\frac14.
\end{equation*}
The matrix on the left of \eqref{matrix-H-identity} has positive leading entry $p$ and determinant $\frac{1}{4}$, and is therefore positive definite. The identity \eqref{matrix-H-identity}, together with the invertibility of \((Y_0,Y_0')\), show that
\(H\) is positive definite.  Let
\[
 B=(2H)^{1/2}
\]
be the positive Hermitian square root.  Then
\[
 H=\frac12B^*B,
 \qquad
 \det B=\sqrt{\det(2H)}=1.
\]
Setting \(Y=BY_0\), a direct calculation yields
\[
 \frac{|Y|^2}{2}
 =\frac12Y_0^*B^*BY_0
 =Y_0^*HY_0=p,
\]
\[
 \det(Y',Y)
 =\det B\,\det(Y_0',Y_0)=1.
\]
This proves \eqref{lift-normalization}.

We now prove the uniqueness of $Y$. Let \(\widetilde Y\) be another normalized lift.  Since
\(\det(\widetilde Y',\widetilde Y)=1\), there exist holomorphic \(\alpha,\beta\) such that
\[
 \widetilde Y''=\alpha\widetilde Y'+\beta\widetilde Y.
\]
Differentiating
\(\det(\widetilde Y',\widetilde Y)=1\) gives
\[
 0=\det(\widetilde Y'',\widetilde Y)=\alpha.
\]
Therefore, \(\widetilde Y''=\widetilde q\,\widetilde Y\) for a holomorphic scalar function
\(\widetilde q\).  Since \(p=\frac{|\widetilde Y|^2}{2}\),
\[
 p_{zz}=\frac12\widetilde Y^*\widetilde Y''=\widetilde q p.
\]
Together with \eqref{lift-normalization} gives
\[\widetilde q=-\phi.\]
Applying the same argument to
\(Y\) implies that \(Y\) and \(\widetilde Y\) are fundamental systems for the same scalar equation.
Therefore,
\[
 \widetilde Y=CY
\]
for a constant \(C\in SL(2,\C)\).

The equality \(|\widetilde Y|=|Y|\) yields
\begin{equation}\label{Un-det-Y}
 Y^*(C^*C-I)Y=0.
\end{equation}
Differentiating \eqref{Un-det-Y} gives
\[
 Y^*(C^*C-I)Y'=0,
 \qquad
 Y'^*(C^*C-I)Y=0,
 \qquad
 Y'^*(C^*C-I)Y'=0.
\]
Since \((Y,Y')\) is invertible, we have \(C^*C=I\). Combining this with
\(\det C=1\) implies \(C\in SU(2)\).
\end{proof}

Next we prove that the lift $Y$ satisfies equation \eqref{Onofri-E-L}. Combining with Proposition \ref{normalized-lift} gives the desired equivalence between the lift $Y$ and equation \eqref{Onofri-E-L}.

\begin{proposition}\label{prop-reverse-lift}
Let
\(
 Y\in A^\infty(\mathbb D;\mathbb C^2)
\)
satisfy \(\det(Y',Y)=1\) and
\begin{equation}\label{boundary-p-Y}
 \partial_r|Y|^2=\lambda|Y|^2-2\quad\text{on }\ \mathbb S^1.
\end{equation}
For
\[
 p=\frac{|Y|^2}{2},
\]
we have \(p>0\) on \(\overline{\mathbb D}\), and \(u=-\log p\) satisfies equation \eqref{Onofri-E-L}.
\end{proposition}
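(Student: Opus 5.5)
Positivity of $p$ comes first. Since $p=\frac12|Y|^2\ge0$, it suffices to show $Y$ never vanishes on $\overline{\mathbb D}$. The normalization $\det(Y',Y)=1$ holds on $\overline{\mathbb D}$ by continuity of $Y,Y'$ up to the boundary. If $Y(z_0)=0$ at some $z_0\in\overline{\mathbb D}$, the determinant would vanish there, a contradiction. Hence $|Y|^2>0$ on $\overline{\mathbb D}$, so $p>0$ and $u=-\log p\in C^\infty(\overline{\mathbb D})$ is well defined.

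The interior equation is a Wronskian computation, reversing Lemma \ref{p-identities}. Since $Y$ is holomorphic, $p_z=\frac12 Y^*Y'$, $p_{\bar z}=\frac12 Y'^*Y$ and $p_{z\bar z}=\frac12|Y'|^2$. The Lagrange identity in $\mathbb C^2$ gives
\[
 pp_{z\bar z}-p_zp_{\bar z}=\frac14\bigl(|Y|^2|Y'|^2-|\langle Y',Y\rangle|^2\bigr)=\frac14|\det(Y',Y)|^2=\frac14 .
\]
The middle equality is the standard identity $|\xi|^2|\eta|^2-|\langle \xi,\eta\rangle|^2=|\xi_1\eta_2-\xi_2\eta_1|^2$. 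On the other hand, with $u=-\log p$,
\[
 u_{z\bar z}=-\frac{p_{z\bar z}}{p}+\frac{p_zp_{\bar z}}{p^2}=-\frac{pp_{z\bar z}-p_zp_{\bar z}}{p^2}=-\frac{1}{4p^2}=-\frac14e^{2u}.
\]
Hence $-\Delta u=-4u_{z\bar z}=e^{2u}$ in $\mathbb D$.

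For the boundary condition we use $\partial_\nu=\partial_r$ on $\mathbb S^1$ and $\partial_r u=-\partial_r p/p$. Condition \eqref{boundary-p-Y} reads $2\partial_r p=2\lambda p-2$, that is, $\partial_r p=\lambda p-1$. Therefore
\[
 \partial_\nu u=-\frac{\lambda p-1}{p}=-\lambda+\frac1p=-\lambda+e^{u},
\]
which is exactly $\frac{\partial u}{\partial\nu}+\lambda=e^u$.

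No step is a real obstacle. The only point needing care is the first one: the Wronskian normalization must hold up to the boundary, so that $p$ does not vanish on $\mathbb S^1$. This follows from $Y\in A^\infty(\mathbb D;\mathbb C^2)$.
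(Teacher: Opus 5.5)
Your proposal is correct and follows the paper's proof essentially step for step: nonvanishing of $Y$ from the Wronskian normalization, then the Gram (Lagrange) identity giving $pp_{z\bar z}-p_zp_{\bar z}=\frac14$ and hence $u_{z\bar z}=-\frac14 e^{2u}$, then rewriting the boundary condition as $p_r=\lambda p-1$. Your one addition is to spell out that $\det(Y',Y)=1$ extends to $\overline{\mathbb D}$ by continuity, so $p>0$ on the closed disk; the paper leaves this implicit.
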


\begin{proof}
The normalization \(\det(Y',Y)=1\) implies \(p>0\).
By the holomorphicity of \(Y\), one obtains
\[
 p_z=\frac12\langle Y',Y \rangle,
 \qquad
 p_{z\bar z}=\frac12|Y'|^2.
\]
The Gram identity then yields
 \begin{align*}
 pp_{z\bar z}-p_zp_{\bar z}
 &=\frac14\left(|Y|^2|Y'|^2-|\langle Y',Y \rangle|^2\right)\nonumber\\
 &=\frac14|\det(Y',Y)|^2
 =\frac14.
 \end{align*}
From Lemma \ref{p-identities} we infer that
\begin{align*}
 u_{z\bar z}
 =-\frac{pp_{z\bar z}-p_zp_{\bar z}}{p^2}
 =-\frac1{4p^2},
 \end{align*}
which is equivalent to \(-\Delta u=p^{-2}=e^{2u}\).

Finally, since \eqref{boundary-p-Y} is equivalent to
\[
 p_r=\lambda p-1\quad\ \mbox{on}\ \mathbb S^1,
\]
we derive
\[
u_r=-\frac{p_r}{p}=\frac1p-\lambda=e^u-\lambda.
\]
The proof is complete.
\end{proof}

From \(\det(Y',Y)=1\), we immediately get \(\det(Y'',Y)=0\).  Thus there exists a holomorphic
function \(q\in A^\infty(\mathbb D)\) such that
\begin{equation}\label{eq-Y-q}
 Y''=qY.
\end{equation}
Combining Lemma \ref{p-identities} with Proposition \ref{normalized-lift} gives
\begin{equation}\label{eq-q-u}
 q=u_z^2-u_{zz}=-\phi.
\end{equation}
We refer to \(q\) as the projective coefficient associated with
the normalized lift \(Y\). Its geometric interpretation in terms of the Schwarzian derivative is
given in the following remark.

\begin{remark}\label{developing-map}
The preceding construction admits a natural interpretation in classical projective geometry. Indeed, the normalized lift $Y=(y_1,y_2)^T$ determines a meromorphic developing map
\[
 f=\frac{y_1}{y_2}:\mathbb D\longrightarrow\widehat\C.
\]
For \(y_2\ne0\), the Wronskian normalization gives
\[
 f'=\frac{y_1'y_2-y_1y_2'}{y_2^2}=\frac1{y_2^2}.
\]
Thus, the normalized lift $Y$ provides a linear representation of the developing map associated with the conformal metric $e^{2u}|dz|^2$.

Since
\[
 e^{-u}=\frac{|Y|^2}{2}=\frac{|y_1|^2+|y_2|^2}{2}
 =\frac{|y_2|^2(1+|f|^2)}{2},
\]
we obtain the classical Liouville representation
\begin{equation*}\label{developing-representation}
 e^u=\frac{2|f'|}{1+|f|^2},
 \qquad
 e^{2u}=\frac{4|f'|^2}{(1+|f|^2)^2}.
\end{equation*}
Moreover, for the Schwarzian derivative
\[
 S(f):=\frac{f'''}{f'}-\frac32\left(\frac{f''}{f'}\right)^2,
\]
a direct calculation from \(Y''=qY\) gives
\begin{equation*}\label{Schwarzian-q}
 S(f)=-2q.
\end{equation*}
Thus \(q\,dz^2\) is precisely the projective differential associated with the developing map.
In particular,
\[
 q\equiv0
 \quad\Longleftrightarrow\quad
 S(f)\equiv0
 \quad\Longleftrightarrow\quad
 f\text{ is a M\"obius transformation}.
\]
This is the classical projective-geometric interpretation of the lift, see
\cite{L1853,H1962,N1949,OS1992}.
\end{remark}

\subsection{The outer factor and Hardy equation}
In this subsection, we construct smooth outer factors and derive the Hardy equation.

We introduce the Euler differential operator
\begin{equation*}\label{D-def}
 D=z\frac{d}{dz},
\end{equation*}
and set
\begin{equation}\label{Phi-def}
 \Phi(z)=z^2q(z).
\end{equation}
Then \eqref{eq-Y-q} is equivalent to
\begin{equation}\label{Euler-Y}
 D(D-1)Y=\Phi Y.
\end{equation}
In particular,
\begin{equation*}\label{Phi-origin}
 \Phi(0)=\Phi'(0)=0.
\end{equation*}

Let
\[
z=e^{i\theta},\quad\ F=|Y|^2\quad\text{on }\mathbb S^1,
\]
and \[\dot{F}=\frac{\partial F}{\partial \theta}.\]

We prove some basic identities on \(\mathbb S^1\), which play a crucial role in the proof of Theorem \ref{Sec-spinor-rigidity}.
\begin{lemma}\label{boundary-projective}
On \(\mathbb S^1\), we have
\begin{equation*}
 \langle DY,Y\rangle=\frac{\lambda F-2-i\dot F}{2},
\end{equation*}
\begin{equation*}\label{Gram-DY}
 F|DY|^2-|\langle DY,Y\rangle|^2=1,
\end{equation*}
\begin{equation*}\label{Phi-boundary-full}
 \Phi F
 =|DY|^2-\frac{\ddot F+\lambda F-2}{2}
 +i\frac{1-\lambda}{2}\dot F,
\end{equation*}
\begin{equation}\label{Im-phi-rep}
 \operatorname{Im}  \Phi=-\frac {a\dot{F}}{2F}.
\end{equation}
\end{lemma}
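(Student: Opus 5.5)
The plan is to turn the Euler operator $D=z\frac{d}{dz}$ into radial and angular derivatives on $\mathbb S^1$, and then to read all four identities off the Robin condition \eqref{boundary-p-Y}, the Wronskian normalization and \eqref{Euler-Y}. The basic observation is the following. For a holomorphic $G$ at $z=re^{i\theta}$ we have $r\partial_rG=DG$ and $\partial_\theta G=iDG$. Hence, at $r=1$,
\[
\partial_r F=2\operatorname{Re}\langle DY,Y\rangle,\qquad \dot F=2\operatorname{Re}\langle iDY,Y\rangle=-2\operatorname{Im}\langle DY,Y\rangle .
\]
By Proposition \ref{normalized-lift} and Proposition \ref{prop-reverse-lift}, the boundary condition reads $\partial_r F=\lambda F-2$ on $\mathbb S^1$. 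Combining the real and imaginary parts gives the first identity
\[
\langle DY,Y\rangle=\frac{\lambda F-2-i\dot F}{2}.
\]

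For the second identity I would use the Gram (Lagrange) identity in $\mathbb C^2$, exactly as in the proof of Proposition \ref{prop-reverse-lift}:
\[
|Y|^2|DY|^2-|\langle DY,Y\rangle|^2=|\det(DY,Y)|^2=|z|^2|\det(Y',Y)|^2 .
\]
This equals $1$ on $\mathbb S^1$ by \eqref{lift-normalization}. The same normalization gives $F>0$, which will be needed to divide by $F$ at the end.

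The third identity is the only step with a subtlety. The Robin condition holds only on the circle, so it cannot be differentiated in $r$. The plan is to differentiate the first identity in $\theta$ instead, which is legitimate because both sides are smooth functions on $\mathbb S^1$. On the left,
\[
\partial_\theta\langle DY,Y\rangle=\langle iD^2Y,Y\rangle+\langle DY,iDY\rangle=i\langle D^2Y,Y\rangle-i|DY|^2 ,
\]
since the Hermitian product is conjugate-linear in the second slot. On the right the derivative is $(\lambda\dot F-i\ddot F)/2$. Solving for the second-order term gives
\[
\langle D^2Y,Y\rangle=|DY|^2-\frac{\ddot F}{2}-\frac{i\lambda\dot F}{2}.
\]
Next, \eqref{Euler-Y} gives $\Phi F=\langle D(D-1)Y,Y\rangle=\langle D^2Y,Y\rangle-\langle DY,Y\rangle$. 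Substituting both expressions yields
\[
\Phi F=|DY|^2-\frac{\ddot F+\lambda F-2}{2}+i\frac{1-\lambda}{2}\dot F .
\]

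Finally, every term on the right except the last is real, so taking imaginary parts and dividing by $F>0$ gives $\operatorname{Im}\Phi=\frac{(1-\lambda)\dot F}{2F}=-\frac{a\dot F}{2F}$, which is \eqref{Im-phi-rep}. The main point requiring care is the bookkeeping of conjugation in the Hermitian product: one must track where the factor $i$ appears from $\partial_\theta=iD$ in the first versus the second slot. Once that is done, everything else is direct substitution.
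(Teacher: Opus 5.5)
Your proof is correct and follows essentially the same argument as the paper. You translate $D$ into $\partial_r$ and $\partial_\theta$ for the first identity, use the Gram identity with $\det(DY,Y)=z$ for the second, and differentiate $\langle DY,Y\rangle$ in $\theta$ together with $D^2Y=DY+\Phi Y$ for the third; the only cosmetic difference is that you isolate $\langle D^2Y,Y\rangle$ before invoking the Euler equation, whereas the paper substitutes it directly into $\dot c$.
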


\begin{proof}
By the holomorphicity of $Y$, we have
\[\partial_rY=DY\quad  \mbox{and}\quad  \dot Y=iDY\ \mbox{ on}\ \mathbb S^1.\]
Combining \eqref{Onofri-E-L} with \eqref{lift-normalization} gives
\[\partial_r|Y|^2=2\langle DY,Y \rangle=\lambda|Y|^2-2\quad\text{on }\ \mathbb S^1.\]
It follows that
\[
 2 \operatorname{Re} \langle DY,Y\rangle=\lambda F-2.
\]
Consequently,
\[
 \dot F=2\operatorname{Re} \langle iDY,Y\rangle=-2 \operatorname{Im} \langle DY,Y\rangle,
\]
and hence
\begin{equation}\label{c-formula}
\langle DY,Y\rangle=\frac{\lambda F-2-i\dot F}{2}.
\end{equation}

By the definition of $D$, we have
\[
 \det(DY,Y)=z\det(Y',Y)=z\quad \mbox{on}\ \mathbb S^1.
\]
Applying the Gram identity then yields
\[
 F|DY|^2-|\langle DY,Y\rangle|^2
 =|\det(DY,Y)|^2=1.
\]

Set
\[
 c=\langle DY,Y\rangle.
\]
It follows from \eqref{Euler-Y} that
\[D^2Y=DY+\Phi Y.\]
Differentiating \(c\) gives
\[
 \begin{aligned}
 \dot c
 &=\langle iD^2Y,Y\rangle+\langle DY,iDY\rangle\\
 &=i\langle D^2Y,Y\rangle-i|DY|^2\\
 &=i\Phi F+ic-i|DY|^2.
 \end{aligned}
\]
Substituting \eqref{c-formula} into this expression yields
\begin{align*}
 \Phi F&=-i\dot c-c+|DY|^2\\
 &=-i\frac{\lambda \dot{F}-i\ddot{F} }{2}-\frac{\lambda F-2-i\dot F}{2}+|DY|^2\\
 &=|DY|^2-\frac{\ddot F+\lambda F-2}{2}
 +i\frac{1-\lambda}{2}\dot F.
 \end{align*}
 Taking imaginary
parts and dividing by \(F>0\), we obtain
\[
 \operatorname{Im}\Phi
 =\frac{1-\lambda}{2}\frac{\dot F}{F}.
\]
\end{proof}

We now construct the scalar outer factor. The idea we shall use is similar to that of \cite{D1970,G2007}.

\begin{lemma}\label{outer-factor}
Let \(F\in C^\infty(\mathbb S^1)\) be a positive function and
\begin{equation}\label{g-def}
 h(z)=\frac1{4\pi}\int_0^{2\pi}
 \frac{e^{it}+z}{e^{it}-z}\log F(e^{it})dt,\quad  g=e^h.
\end{equation}
Then we have
\begin{equation}\label{h-g-propertity}
 h,g\in A^\infty(\mathbb D),
 \qquad
 g(0)>0,
 \qquad
 g\ne0\text{ on }\overline{\mathbb D},
 \qquad
 |g|^2=F\text{ on }\mathbb S^1,
\end{equation}
and \(g\) is uniquely determined by condition \eqref{h-g-propertity}.
\end{lemma}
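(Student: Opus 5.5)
Since $F$ is smooth and positive on $\mathbb S^1$, $\log F\in C^\infty(\mathbb S^1)$ is real-valued. The function $h$ in \eqref{g-def} is then the Herglotz (Schwarz) integral of $\frac12\log F$. So $h$ is holomorphic in $\mathbb D$, and $\operatorname{Re}h$ is the Poisson integral of $\frac12\log F$. Evaluating at $z=0$ gives
\[
h(0)=\frac1{4\pi}\int_0^{2\pi}\log F\,dt\in\mathbb R,
\]
and hence $g(0)=e^{h(0)}>0$.

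The first step is smoothness up to the boundary. I would expand $\frac12\log F=\sum_{n\in\mathbb Z}c_ne^{in\theta}$ with $c_{-n}=\overline{c_n}$. Since $\frac{e^{it}+z}{e^{it}-z}=1+2\sum_{n\ge1}z^ne^{-int}$, one gets
\[
h(z)=c_0+2\sum_{n\ge1}c_nz^n .
\]
Smoothness of $\log F$ gives $|c_n|=O(n^{-k})$ for every $k$. So every term-by-term derivative of this series converges uniformly on $\overline{\mathbb D}$, and $h\in A^\infty(\mathbb D)$. On $\mathbb S^1$, the boundary values are $h=\frac12\log F+i\,\mathcal H(\tfrac12\log F)$, where $\mathcal H$ is the conjugate function. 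Indeed, $c_0+2\sum_{n\ge1}c_ne^{in\theta}$ has real part $\sum_n c_ne^{in\theta}=\frac12\log F$. Hence $|g|^2=e^{2\operatorname{Re}h}=F$ on $\mathbb S^1$. Composing the smooth function $e^h$ gives $g\in A^\infty(\mathbb D)$, and $g=e^h$ never vanishes on $\overline{\mathbb D}$ because $h$ is finite and continuous there.

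For uniqueness, let $\tilde g$ be another function satisfying \eqref{h-g-propertity}. Then $\tilde g$ and $g$ are nonvanishing on the simply connected set $\overline{\mathbb D}$, so $k=\tilde g/g$ is holomorphic in $\mathbb D$, continuous and nonvanishing on $\overline{\mathbb D}$, with $|k|=1$ on $\mathbb S^1$. Applying the maximum principle to $k$ and to $1/k$ gives $|k|\le1$ and $|k|\ge1$, so $|k|\equiv1$. By the open mapping theorem $k$ is a unimodular constant. Finally, $k(0)=\tilde g(0)/g(0)>0$, which forces $k=1$.

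The only delicate point is the boundary regularity of the conjugate function, i.e.\ that $h$ is smooth up to $\mathbb S^1$. The rapid decay of the Fourier coefficients handles this directly, so no singular-integral estimates are needed.
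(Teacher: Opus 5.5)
Your proposal is correct and follows the paper's proof essentially step for step: the same expansion of the Herglotz kernel gives $h=\tfrac12\ell_0+\sum_{n\ge1}\ell_n z^n$, whose rapidly decaying coefficients yield $h\in A^\infty(\mathbb D)$ and $\operatorname{Re}h=\tfrac12\log F$ on $\mathbb S^1$, and uniqueness comes from applying the maximum principle to the quotient and its reciprocal, then the open mapping theorem and the normalization at the origin. (The simple connectivity of $\overline{\mathbb D}$ that you invoke in the uniqueness step is not needed, since the quotient of two nonvanishing holomorphic functions is automatically holomorphic and nonvanishing.)
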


\begin{proof}
Write
\[
 \log F(e^{it})=\sum_{k\in\mathbb Z}\ell_ke^{ikt}.
\]
Since \(\log F\in C^\infty(\mathbb S^1)\), the Fourier coefficients \(\ell_k\) decay faster than any power.
Using the identity
\[
 \frac{e^{it}+z}{e^{it}-z}
 =1+2\sum_{n=1}^\infty z^ne^{-int},
 \qquad |z|<1
\]
yields
\begin{align*}
 h(z)&=\frac1{4\pi}\int_0^{2\pi}(1+2\sum_{n=1}^\infty z^ne^{-int}) \sum_{k\in\mathbb Z}\ell_ke^{ikt}dt\nonumber\\
 &=\frac{\ell_0}{2}+\sum_{n=1}^\infty\ell_nz^n.
\end{align*}
This implies \(h\in A^\infty(\mathbb D)\) and
\begin{align}\label{Re-h-Fourier}
 \operatorname{Re} h=\frac12\log F\quad \mbox{on}\ \mathbb S^1.
\end{align}
Thus \(g\in A^\infty(\mathbb D)\) has no zeros and \(g(0)=e^{\ell_0/2}>0\),
and \(|g|^2=F\) on \(\mathbb S^1\).

Suppose \(g_1,g_2\) satisfy \eqref{h-g-propertity}. Set
\[\varphi=\frac{g_1}{g_2}.\]
Then \(\varphi\) and \(\frac{1}{\varphi}\) are holomorphic in \(\mathbb D\), continuous on \(\overline{\mathbb D}\), and satisfy
\(|\varphi|=1\) on \(\mathbb S^1\).  Applying the maximum principle to functions \(\varphi\) and \(\frac{1}{\varphi}\) gives
\[|\varphi|=1\quad \mbox{in}\ \mathbb D .\]
The open mapping theorem then implies that
 \(\varphi\) is constant.
Noting that \(g_1(0)>0\) and \(g_2(0)>0\), we derive \(\varphi\equiv1\).
\end{proof}

Applying Lemma \ref{outer-factor} to \(F=|Y|^2\), we obtain the following identities.

\begin{proposition}\label{projective-outer}
The outer factor $h$ satisfies
\begin{equation}\label{Phi-aDh}
 \Phi=aDh
 \quad\text{in }\mathbb D.
\end{equation}
If $a\neq0$ and
\[
 h(z)=\sum_{n=0}^\infty h_nz^n,
 \qquad
 g(z)=\sum_{n=0}^\infty b_nz^n,
\]
then
\begin{equation*}\label{h1-b1-zero}
 h_1=b_1=0.
\end{equation*}
% If $a=0$, then $\Phi=0$.
\end{proposition}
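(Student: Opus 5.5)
We prove \eqref{Phi-aDh} by comparing the boundary values of the two holomorphic functions $\Phi$ and $aDh$, both of which lie in $A^\infty(\mathbb D)$. On $\mathbb S^1$, write $z=e^{i\theta}$. Since $h$ is holomorphic, $Dh=-i\,\partial_\theta h$ there. Taking imaginary parts gives
\[
\operatorname{Im}(Dh)=-\operatorname{Re}(\partial_\theta h)=-\partial_\theta\operatorname{Re}h.
\]
By \eqref{Re-h-Fourier}, $\operatorname{Re}h=\frac12\log F$ on $\mathbb S^1$. Hence
\[
\operatorname{Im}(aDh)=-\frac a2\,\frac{\dot F}{F}\quad\text{on }\mathbb S^1,
\]
which coincides with $\operatorname{Im}\Phi$ by \eqref{Im-phi-rep} of Lemma \ref{boundary-projective}.

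Set $G=\Phi-aDh$. Then $G\in A^\infty(\mathbb D)$, and $\operatorname{Im}G$ is harmonic in $\mathbb D$, continuous on $\overline{\mathbb D}$, and vanishes on $\mathbb S^1$. By the maximum principle $\operatorname{Im}G\equiv0$, so $G$ is a real constant by the open mapping theorem (or Cauchy--Riemann). To evaluate it, set $z=0$. We have $\Phi(0)=0$ from \eqref{Phi-def}, and $Dh(0)=0\cdot h'(0)=0$, so $G(0)=0$. Therefore $G\equiv0$, which proves \eqref{Phi-aDh}.

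For the coefficient statement, expand both sides. Since $Dh=\sum_{n\ge1}nh_nz^n$, the coefficient of $z$ on the right of \eqref{Phi-aDh} is $ah_1$. On the left, $\Phi=z^2q$ has no linear term, because $\Phi'(0)=0$. So $ah_1=0$, and since $a\neq0$ we get $h_1=0$. Finally, $g=e^h$ gives $g'=h'g$, so $b_1=g'(0)=h_1g(0)=0$.

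The only delicate point is the boundary sign bookkeeping: checking that $\operatorname{Im}(Dh)$ equals $-\partial_\theta\operatorname{Re}h$ and not its negative. This follows from $\partial_\theta h=iz h'=iDh$, i.e. $Dh=-i\partial_\theta h$. Everything else is routine.
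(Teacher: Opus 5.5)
Your proof is correct and follows the paper's argument essentially step for step. You compare imaginary parts on $\mathbb S^1$ via $Dh=-i\partial_\theta h$ and \eqref{Im-phi-rep}, then apply the maximum principle to $\operatorname{Im}(\Phi-aDh)$ and normalize at $z=0$. The coefficient claim then comes from reading off the linear Taylor term, with $b_1=b_0h_1$ giving $b_1=0$.
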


\begin{proof}
Set
\[h=A+iB\quad\ \mbox{on}\ \mathbb S^1.\]
From \eqref{Re-h-Fourier}, we have
\[
 A=\frac12\log F.
\]
On $\mathbb S^1$, the identity \(\dot h=iDh\) gives
\[
 Dh=-i\dot h=\dot B-i\dot A,\qquad
 \operatorname{Im}(aDh)=-a\dot A=-\frac a2\frac{\dot F}{F}.
\]
Comparison with \eqref{Im-phi-rep} yields
\[\operatorname{Im}(\Phi-aDh)=0\quad \mbox{on}\ \mathbb S^1.\]
The maximum principle applied to the harmonic function $\operatorname{Im}(\Phi-aDh)$ shows that it vanishes in $\mathbb D$. The holomorphic function \(\Phi-aDh\) therefore has real range and must be constant.
Observing that \( \Phi(0)=Dh(0)=0\),
we conclude
\[\Phi-aDh=0.\]
This proves \eqref{Phi-aDh}.

A direct calculation gives
\[
 Dh(z)=\sum_{n=1}^\infty nh_nz^n.
\]
From \eqref{Phi-def}, \(\Phi\) has a zero of order at least two at the origin.
Since \(a\neq0\), the coefficient of \(z\) in \(Dh\) vanishes, which implies \(h_1=0\).  Therefore, using \eqref{g-def} gives
\[
 b_1=g'(0)=g(0)h'(0)=b_0h_1=0.
\]
\end{proof}

We now turn to the exact Hardy equation involving $Y$, which arises from the projective structure  $D(D-1)Y=\Phi Y$. To exploit this structure, we define
\begin{equation}\label{eq-X-def}
 X=\frac{Y}{g}.
\end{equation}
Then \(X\in A^\infty(\mathbb D;\mathbb C^2)\) and
\begin{equation}\label{X-inner}
 |X|=1\quad\text{on }\mathbb S^1.
\end{equation}
Thus \(X\) is a smooth \(\mathbb C^2\)-valued function.
The map
$$M_X: f\rightarrow Xf$$
 is an
isometry, i.e.,
\[\| Xf\|_{H^2(\mathbb{D}; \mathbb{C}^2)} = \| f\|_{H^2(\mathbb{D})}, \quad \forall f \in H^2(\mathbb{D}),
\]
see \cite{RR1985,NN2002}.

\begin{proposition}\label{prop-exact-Hardy}
The factors \(Y=gX\) satisfy
\begin{equation}\label{exact-Hardy}
 D(D-1)(gX)=aX Dg.
\end{equation}
Moreover, we have
\begin{equation}\label{Wronskian-X}
 \det(X',X)=g^{-2}.
\end{equation}
\end{proposition}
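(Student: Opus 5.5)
Both identities follow by direct substitution into results already in hand: \eqref{Euler-Y} together with Proposition \ref{projective-outer} gives \eqref{exact-Hardy}, and the normalization in Proposition \ref{normalized-lift} gives \eqref{Wronskian-X}.

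\textbf{Step 1: the exact Hardy equation.} By \eqref{Euler-Y}, $D(D-1)Y=\Phi Y$. Substitute $Y=gX$ from \eqref{eq-X-def} and $\Phi=aDh$ from \eqref{Phi-aDh}. Since $g=e^h$ by \eqref{g-def}, we have $Dg=g\,Dh$, and hence
\[
\Phi Y = a\,(Dh)\,gX = a\,X\,Dg .
\]
This is exactly \eqref{exact-Hardy}. The substitution is legitimate for two reasons. First, $g$ is nonvanishing on $\overline{\mathbb D}$ and lies in $A^\infty(\mathbb D)$ by \eqref{h-g-propertity}. Second, $Y\in A^\infty(\mathbb D;\mathbb C^2)$, so $X\in A^\infty(\mathbb D;\mathbb C^2)$ and all operators act on smooth holomorphic functions.

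\textbf{Step 2: the Wronskian of $X$.} Write $X=Y/g$, so that
\[
X'=\frac{Y'}{g}-\frac{g'}{g^2}\,Y .
\]
The determinant is bilinear and alternating, so the term proportional to $Y$ in the first column drops out:
\[
\det(X',X)=\det\Bigl(\frac{Y'}{g},\frac{Y}{g}\Bigr)=g^{-2}\det(Y',Y).
\]
By \eqref{lift-normalization}, $\det(Y',Y)=1$, and therefore $\det(X',X)=g^{-2}$, which is \eqref{Wronskian-X}.

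\textbf{Where care is needed.} Neither step involves a real obstacle. The only point to check is the sign and placement of $a$. The parameter $a=\lambda-1$ enters solely through $\Phi=aDh$, and that identity is valid for every $a$, including $a=0$, where \eqref{exact-Hardy} reduces to $D(D-1)Y=0$. So \eqref{exact-Hardy} holds without assuming $a\neq0$, and no coefficient information such as $h_1=0$ is needed for this proposition.
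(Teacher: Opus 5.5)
Your proof is correct and follows the paper's argument essentially verbatim: $Dg=(Dh)g$ combined with \eqref{Euler-Y} and \eqref{Phi-aDh} gives \eqref{exact-Hardy}, and the quotient rule together with the alternating property of the determinant and $\det(Y',Y)=1$ gives \eqref{Wronskian-X}. Your remark that \eqref{exact-Hardy} holds for every $a$, including $a=0$, is also consistent with the paper, since the derivation of $\Phi=aDh$ in Proposition \ref{projective-outer} does not use $a\neq0$.
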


\begin{proof}
Using \eqref{g-def}, we have  \(Dg=(Dh)g\).
Applying \eqref{Euler-Y} and \eqref{Phi-aDh} then yields
\[
 D(D-1)(gX)=\Phi(gX)=a(Dh)gX=aX Dg.
\]
Moreover,
\[
 X'=\frac{Y'}g-\frac{g'}{g^2}Y.
\]
The identity \(\det(Y,Y)=0\) gives
\[
 \det(X',X)
 =g^{-2}\det(Y',Y)
 =g^{-2}.
\]
\end{proof}

\subsection{Hardy spectral majorization and moment identities}
\

In this subsection, we establish a Hardy spectral majorization principle and derive the corresponding moment identities, which play a central role in the endpoint rigidity argument, namely $\lambda=2$.

For a scalar or vector-valued Hardy function
\[
 f(z)=\sum_{n=0}^\infty f_nz^n,
\]
we use the normalized norm
\[
 \| f\|_{H^2}^2
 =\frac1{2\pi}\int_0^{2\pi}|f(e^{i\theta})|^2\,d\theta
 =\sum_{n=0}^\infty|f_n|^2,
\]
where the last equality follows from Parseval's identity.
Let \( H^\infty(\mathbb D;\mathbb C^m)\) denote the space of bounded holomorphic functions under the norm
$$\|f\|_{H^\infty}=\sup_{|z|<1} |f(z)|= \operatorname*{ess\,sup}_{0 \le \theta < 2\pi} |f(e^{i\theta})|$$
Let \(P_N\) denote the orthogonal projection onto polynomials of degree at most \(N\).

We now establish spectral majorization for the Taylor coefficients of $Xf$ and $f$, which follows from the inner multiplier $X$.

\begin{lemma}\label{spectral-majorization}
Let \(X\in H^\infty(\mathbb D;\mathbb C^m)\) satisfy \(|X|=1\) almost everywhere on \(\mathbb S^1\), and
\[
 f(z)=\sum_{n=0}^\infty b_nz^n\in H^2(\mathbb D),
 \qquad
 Xf=\sum_{n=0}^\infty A_nz^n.
\]
Then, for every \(N\ge0\),
\begin{equation*}\label{less-partial-majorization}
 \sum_{n=0}^N|A_n|^2\le\sum_{n=0}^N|b_n|^2.
\end{equation*}
Furthermore, if \((w_n)_{n\ge0}\) is a nonnegative, nondecreasing sequence and
$$\sum_{n=0}^\infty w_n|A_n|^2<\infty,\quad \sum_{n=0}^\infty w_n|b_n|^2<\infty,$$
then
\begin{equation}\label{weighted-majorization}
 \sum_{n=0}^\infty w_n|A_n|^2
 \ge
 \sum_{n=0}^\infty w_n|b_n|^2.
\end{equation}
If equality holds in \eqref{weighted-majorization}, then
\begin{equation}\label{equality-partial}
 \sum_{n=0}^N|A_n|^2=\sum_{n=0}^N|b_n|^2
\end{equation}
for every \(N\) with \(w_{N+1}>w_N\).
\end{lemma}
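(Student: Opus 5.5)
The plan is to compare the two coefficient sequences through the truncations $P_N$, using that multiplication by $X$ is an isometry that cannot move mass to lower Taylor modes. Then I convert the resulting partial-sum inequalities into the weighted inequality by Abel summation.

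\textbf{Partial-sum inequality.} Write $f=P_Nf+(f-P_Nf)$. The tail $f-P_Nf=z^{N+1}r$ with $r\in H^2(\mathbb D)$, so $X(f-P_Nf)=z^{N+1}Xr$ has no Taylor coefficients of index $\le N$. Hence
\[
\sum_{n=0}^N A_nz^n=P_N(Xf)=P_N(X\,P_Nf),
\]
and since $P_N$ is an orthogonal projection and $M_X$ is an isometry (because $|X|=1$ a.e. on $\mathbb S^1$, by Parseval),
\[
\sum_{n=0}^N|A_n|^2=\|P_N(XP_Nf)\|_{H^2}^2\le\|XP_Nf\|_{H^2}^2=\|P_Nf\|_{H^2}^2=\sum_{n=0}^N|b_n|^2 .
\]
The isometry also gives the total identity $\sum_n|A_n|^2=\sum_n|b_n|^2<\infty$.

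\textbf{Weighted inequality.} Set $S_N^A=\sum_{n\le N}|A_n|^2$, $S_N^b=\sum_{n\le N}|b_n|^2$, and let $S$ be their common limit. Then the tails satisfy
\[
T_N^A:=S-S_{N-1}^A\ \ge\ T_N^b:=S-S_{N-1}^b \qquad\text{for every } N\ge1,
\]
with $T_0^A=T_0^b=S$. Abel summation gives
\[
\sum_{n\ge0}w_n|A_n|^2=w_0S+\sum_{N\ge1}(w_N-w_{N-1})\,T_N^A ,
\]
and similarly for $b$. Since $w_N-w_{N-1}\ge0$, the inequality \eqref{weighted-majorization} follows termwise.

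\textbf{Justifying the summation by parts.} This is the main technical point, since $w_n$ may be unbounded. It is handled by working with the nonnegative terms directly: with $w_{-1}=0$,
\[
\sum_n w_n|A_n|^2=\sum_n\sum_{N\le n}(w_N-w_{N-1})|A_n|^2 ,
\]
and Tonelli's theorem lets me swap the order of summation. The finiteness hypotheses $\sum_n w_n|A_n|^2<\infty$ and $\sum_n w_n|b_n|^2<\infty$ make the subtraction of the two resulting series legitimate.

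\textbf{Equality case.} If equality holds in \eqref{weighted-majorization}, the difference is
\[
\sum_{N\ge1}(w_N-w_{N-1})\bigl(T_N^A-T_N^b\bigr)=0 ,
\]
which is a sum of nonnegative terms. So each term vanishes, and whenever $w_{N+1}>w_N$ we get $T_{N+1}^A=T_{N+1}^b$, i.e. $S_N^A=S_N^b$. This is exactly \eqref{equality-partial}.
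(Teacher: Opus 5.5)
Your proposal is correct and is essentially the paper's argument. It uses the same identity $P_NM_X=P_NM_XP_N$ together with the isometry of $M_X$ for the partial sums, and then Abel summation against the nonnegative increments $w_{N+1}-w_N$; your tail difference $T_{N+1}^A-T_{N+1}^b$ is exactly the paper's deficit $\delta_N=\sum_{n\le N}(|b_n|^2-|A_n|^2)$. The only difference is bookkeeping: you write the weighted sums in tail form and swap the order of summation by Tonelli, so no boundary term appears, whereas the paper sums by parts up to $M$ and must show that $0\le w_M\delta_M\le\sum_{n>M}w_n|A_n|^2\to0$.
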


\begin{proof}
Define \(M_Xf=Xf\).  Since \(|X|=1\) almost everywhere on \(\mathbb S^1\), \(M_X\) is an isometry, i.e.,
\[
 \| M_Xf\|_{H^2}=\| f\|_{H^2}.
\]
Since \(X\) has only nonnegative Fourier modes, the first \(N+1\) Taylor coefficients of \(Xf\) depend only on the coefficients of \(f\) up to degree \(N\).  This gives the operator identity
\[
 P_NM_X=P_NM_XP_N.
\]
A straightforward computation shows that
\[
 \begin{aligned}
 \sum_{n=0}^N|A_n|^2&=\| P_N(Xf)\|_{H^2}\\
 &=\| P_NM_XP_Nf\|_{H^2}\\
 &\le\| M_XP_Nf\|_{H^2}\\
 &=\| P_Nf\|_{H^2}=\sum_{n=0}^N|b_n|^2.
 \end{aligned}
\]

Set
\[
 \delta_N=\sum_{n=0}^N\bigl(|b_n|^2-|A_n|^2\bigr)\ge0.
\]
The isometry \(M_X\) yields
\begin{equation}\label{equality-sum-unweight}
 \| M_Xf\|_{H^2}=\| Xf\|_{H^2}=\| f\|_{H^2}= \sum_{n=0}^{\infty}|A_n|^2=\sum_{n=0}^{\infty}|b_n|^2
\end{equation}
and \(\delta_N\to0\) as $N\rightarrow\infty$.  For each
\(M\), summation by parts yields
\begin{equation}\label{Abel-finite}
 \sum_{n=0}^M w_n\bigl(|A_n|^2-|b_n|^2\bigr)
 =\sum_{N=0}^{M-1}(w_{N+1}-w_N)\delta_N-w_M\delta_M.
\end{equation}
Moreover, from \eqref{equality-sum-unweight} we obtain
\[
 \delta_M=\sum_{n>M}\bigl(|A_n|^2-|b_n|^2\bigr).
\]
Hence
\begin{equation}\label{partial-sum}
 0\le w_M\delta_M
 \le w_M\sum_{n>M}|A_n|^2
 \le\sum_{n>M}w_n|A_n|^2
 \longrightarrow0.
\end{equation}
Passing to the limit in \eqref{Abel-finite}, we derive
\begin{equation}\label{total-Abel-infinite}
 \sum_{n=0}^\infty w_n\bigl(|A_n|^2-|b_n|^2\bigr)
 =\sum_{N=0}^\infty(w_{N+1}-w_N)\delta_N\ge0.
\end{equation}
Using \eqref{Abel-finite}-\eqref{total-Abel-infinite}, one obtains \eqref{weighted-majorization}.

If equality holds in \eqref{weighted-majorization}, then \eqref{total-Abel-infinite} becomes
\[
\sum_{N=0}^\infty(w_{N+1}-w_N)\delta_N=0.
\]
All terms in this series are nonnegative. For every index satisfying \(w_{N+1}>w_N\), the corresponding term vanish only when
\(\delta_N=0\).
This proves \eqref{equality-partial}.
\end{proof}

Next we establish the following first and second-moments identities.
 Set
\begin{equation}\label{coefficients-all}
 Y(z)=\sum_{n=0}^\infty A_nz^n,
 \qquad
 g(z)=\sum_{n=0}^\infty b_nz^n,
 \qquad
 X(z)=\sum_{n=0}^\infty X_nz^n.
\end{equation}

\begin{proposition}\label{prop-moment-identities}
Under the hypothesis of Proposition \ref{prop-exact-Hardy}, we have
\begin{equation}\label{first-moment}
 \sum_{n=0}^\infty n(n-1)|A_n|^2
 =a\sum_{n=0}^\infty n|b_n|^2,
\end{equation}
\begin{equation}\label{second-moment}
 \sum_{n=0}^\infty n^2(n-1)^2|A_n|^2
 =a^2\sum_{n=0}^\infty n^2|b_n|^2.
\end{equation}
\end{proposition}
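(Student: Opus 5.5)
The plan is to read off both identities from the exact Hardy equation \eqref{exact-Hardy} of Proposition~\ref{prop-exact-Hardy}, by computing an $H^2$ inner product and an $H^2$ norm on $\mathbb S^1$. Two facts do all the work. First, the operator $D=z\frac{d}{dz}$ acts diagonally on Taylor coefficients, $D(z^n)=nz^n$. Second, $|X|=1$ on $\mathbb S^1$ by \eqref{X-inner}. Since $Y,g\in A^\infty(\mathbb D)$, their Taylor coefficients $A_n,b_n$ decay faster than any power of $n$. Hence every series below converges absolutely, and Parseval's identity may be applied freely to $Y$, $g$, $D(D-1)Y$ and $Dg$, all of which lie in $A^\infty\subset H^2$.

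For the second-moment identity \eqref{second-moment}, expand with the notation \eqref{coefficients-all}:
\[
D(D-1)Y=\sum_{n\ge0} n(n-1)A_nz^n,
\qquad
Dg=\sum_{n\ge0} nb_nz^n .
\]
Parseval gives $\|D(D-1)Y\|_{H^2}^2=\sum n^2(n-1)^2|A_n|^2$. By \eqref{exact-Hardy}, $D(D-1)Y=aX\,Dg$. Because $|X|=1$ on $\mathbb S^1$ and $a$ is real,
\[
\|aX\,Dg\|_{H^2}^2=\frac{a^2}{2\pi}\int_0^{2\pi}|Dg|^2\,d\theta=a^2\sum n^2|b_n|^2 .
\]
This is exactly the isometry property of $M_X$ recorded before Lemma~\ref{spectral-majorization}.

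For the first-moment identity \eqref{first-moment}, take the $H^2$ inner product of both sides of \eqref{exact-Hardy} with $Y=gX$. On the left, Parseval gives
\[
\langle D(D-1)Y,Y\rangle_{H^2}=\sum n(n-1)|A_n|^2 .
\]
On the right, the integrand on $\mathbb S^1$ is $aX\,Dg\cdot\overline{gX}$ (Hermitian product in $\mathbb C^2$), which equals $a\,Dg\,\bar g\,|X|^2=a\,Dg\,\bar g$. Therefore
\[
\langle aX\,Dg,\,gX\rangle_{H^2}=a\langle Dg,g\rangle_{H^2}=a\sum n|b_n|^2 .
\]
Both sides are real, so no real-part bookkeeping is needed.

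The only point needing care is justifying that the boundary integrals coincide with the coefficient sums, i.e.\ that the boundary values of these $A^\infty$ functions are their Fourier series with only nonnegative modes. This is immediate from smoothness up to the boundary, so I expect no real obstacle. The substantive input, namely $\Phi=aDh$ and hence \eqref{exact-Hardy}, has already been established in Propositions~\ref{projective-outer} and~\ref{prop-exact-Hardy}.
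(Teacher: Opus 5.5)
Your proposal is correct and follows essentially the same route as the paper. You pair the exact Hardy equation \eqref{exact-Hardy} with $Y=gX$ in $H^2$ for the first moment and take $H^2$ norms for the second, using $|X|=1$ on $\mathbb S^1$ and the diagonal action of $D$ on Taylor coefficients. Your justification of the convergence and of Parseval via $Y,g\in A^\infty$ is a welcome detail that the paper leaves implicit.
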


\begin{proof}
Applying Proposition \ref{prop-exact-Hardy} and \eqref{coefficients-all}, we have
\[
 \begin{aligned}
 \sum_{n=0}^\infty n(n-1)|A_n|^2
 &=\langle D(D-1)Y,Y\rangle_{H^2}\\
 &=a\langle XDg,Xg\rangle_{H^2}\\
 &=a\langle Dg,g\rangle_{H^2}\\
 &=a\sum_{n=0}^\infty n|b_n|^2,
 \end{aligned}
\]
and
\begin{equation}\label{com-second-moment}
 \parallel D(D-1)Y\parallel_{H^2}^2
 =a^2\parallel XDg\parallel_{H^2}^2
 =a^2\parallel Dg\parallel_{H^2}^2.
\end{equation}
From \eqref{com-second-moment}, we immediately derive \eqref{second-moment}.
\end{proof}

\subsection{Proof of Theorem \ref{radial-rigidity} }
\
In this subsection, we complete the proof of Theorem \ref{radial-rigidity}.
To this end, we establish the following holomorphic rigidity result.

\begin{theorem}\label{Sec-spinor-rigidity}
Let $0<\lambda\leq2$, and let $ Y\in A^{\infty}(\mathbb D;\mathbb C^2)$
satisfy
\begin{equation}\label{abstract-spinor-system}
 \det(Y',Y)=1\quad\text{in }\mathbb D,
 \qquad
 \partial_r|Y|^2=\lambda|Y|^2-2\quad\text{on }\mathbb S^1.
\end{equation}
Then \(Y\) is affine, i.e.,
\[
 Y(z)=A_0+A_1z
\]
for some \(A_0,A_1\in\mathbb C^2\).
\end{theorem}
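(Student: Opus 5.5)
The plan is to show that the projective coefficient vanishes. From \eqref{eq-Y-q} we have $Y''=qY$ and $\Phi=z^2q$, so $\Phi\equiv0$ gives $Y''\equiv0$, i.e.\ $Y$ is affine. By Proposition \ref{prop-reverse-lift}, the hypotheses \eqref{abstract-spinor-system} put us exactly in the setting of Lemma \ref{boundary-projective} and Propositions \ref{projective-outer}--\ref{prop-moment-identities}, with $a=\lambda-1\in(-1,1]$. If $a=0$, then \eqref{Phi-aDh} gives $\Phi=aDh\equiv0$ immediately. So assume $a\neq0$.

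\emph{Step 1: majorization.} By Proposition \ref{projective-outer} we have $b_1=0$. Apply Lemma \ref{spectral-majorization} to $f=g$ and $Xf=Y$ (using \eqref{X-inner}) with the nondecreasing weight $w_n=n^2(n-1)^2$. All sums are finite because $Y,g\in A^\infty$. Inequality \eqref{weighted-majorization} together with the second moment identity \eqref{second-moment} gives
\[
\sum_{n\ge0} n^2(n-1)^2|b_n|^2\le \sum_{n\ge0} n^2(n-1)^2|A_n|^2=a^2\sum_{n\ge0} n^2|b_n|^2 .
\]
Since $b_1=0$, this rearranges to $\sum_{n\ge2}n^2\bigl((n-1)^2-a^2\bigr)|b_n|^2\le0$. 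Every term is nonnegative because $a^2\le1$. Hence $b_n=0$ for $n\ge3$, and also $b_2=0$ unless $a^2=1$, i.e.\ unless $\lambda=2$. (For $a<0$ the first moment identity \eqref{first-moment} already gives this directly: its left side is $\ge0$ and its right side is $\le0$.)

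\emph{Step 2: the case $|a|<1$.} Step 1 shows that $g\equiv b_0$ is constant. Then $Dh=Dg/g=0$, so $\Phi=aDh=0$ and $Y$ is affine.

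\emph{Step 3: the endpoint $\lambda=2$.} I expect this to be the main obstacle. Here Step 1 only gives $g=b_0+b_2z^2$, and the second-moment inequality is an equality. Since $w_{N+1}>w_N$ for every $N\ge1$, the equality clause \eqref{equality-partial} yields $\sum_{n\le N}|A_n|^2=\sum_{n\le N}|b_n|^2$ for all $N\ge1$. Consequently $|A_0|^2+|A_1|^2=|b_0|^2$, $|A_2|^2=|b_2|^2$, and $A_n=0$ for $n\ge3$. Thus $Y$ is a quadratic polynomial and $Y''=2A_2$ is a constant vector.

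Suppose $A_2\ne0$. Then $qY=2A_2$ forces $q\not\equiv0$, so $Y=2A_2/q$ wherever $q\ne0$. This makes $Y$ pointwise parallel to the fixed vector $A_2$, so $\det(Y',Y)=0$ on an open set, contradicting $\det(Y',Y)=1$. Hence $A_2=0$, which also gives $b_2=0$. So $Y$ is affine in this case too, and $g$ is constant, consistent with $\Phi=0$.
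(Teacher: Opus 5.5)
Your proof is correct. It uses the same machinery as the paper: the outer factor $g$, the fact $b_1=0$, and Lemma \ref{spectral-majorization} applied to $Y=Xg$. However, you feed it a different moment identity and you close the endpoint $\lambda=2$ differently.

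The paper uses the first moment \eqref{first-moment} with $w_n=n(n-1)$. This gives the chain \eqref{key-first-chain}, $a\sum n|b_n|^2\ge\sum n(n-1)|b_n|^2\ge\sum n|b_n|^2\ge0$, which forces $g$ constant for $a<0$ and for $0<a<1$ by a sign and size comparison. At $\lambda=2$ the paper uses only the $N=1$ equality $|A_0|^2+|A_1|^2=|b_0|^2$. It transfers this to the inner factor as $|X_0|^2+|X_1|^2=1$, concludes from $\|X\|_{H^2}=1$ that $X$ is affine, and then reads off $g^{-2}=\det(X_1,X_0)$ from \eqref{Wronskian-X}.

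You instead use the second moment \eqref{second-moment} with $w_n=n^2(n-1)^2$. This yields the termwise inequality $\sum_{n\ge2}n^2\bigl((n-1)^2-a^2\bigr)|b_n|^2\le0$, which is exactly \eqref{second-outer-inequality} from Section 4. So every nonzero $a\in(-1,1]$ is handled by a single inequality, and the role of the threshold $a^2=1$ is visible term by term. At the endpoint you use that these weights increase strictly from $N=1$ on. Equality then propagates to every partial sum, so $Y$ itself is a quadratic polynomial. Finally, $Y''=2A_2$ together with $Y''=qY$ and $\det(Y',Y)=1$ rules out $A_2\neq0$.

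What each route buys: yours never passes to the inner factor, and it ties this theorem directly to the analysis behind Theorem \ref{quadratic-obstruction}. The paper's route needs only the lower-order first moment and makes real use of the factorization $Y=gX$ through the Wronskian identity \eqref{Wronskian-X}.
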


\begin{proof}
From Proposition \ref{projective-outer}, it follows that \(b_1=0\).  Applying Lemma
\ref{spectral-majorization} with
\[
 w_n=n(n-1)
\]
and using \eqref{first-moment}, we obtain
\begin{equation}\label{key-first-chain}
 a\sum_{n=0}^\infty n|b_n|^2
 =\sum_{n=0}^\infty n(n-1)|A_n|^2
 \ge\sum_{n=0}^\infty n(n-1)|b_n|^2
 \ge \sum_{n=0}^\infty n|b_n|^2\geq 0.
\end{equation}
Here the last inequality in \eqref{key-first-chain} follows from \(b_1=0\) and
\[
 n(n-1)\ge n,\quad \mbox{for}\ n\ge2.
\]

For $0<\lambda<1$, one has \(a<0\).
Substituting this into \eqref{key-first-chain} yields
\[\sum_{n=0}^\infty n|b_n|^2=0.\]
Similarly, for $1<\lambda<2$, the same argument gives \(\sum_{n=0}^\infty n|b_n|^2=0\).
Consequently, by \eqref{coefficients-all}, the function \(g\) is
constant.  Combining \eqref{g-def} with \eqref{Phi-aDh}, we conclude that
\begin{equation*}\label{big-range-zero}
\Phi\equiv0.
\end{equation*}
For $\lambda=1$, we have $a=0$. Applying \eqref{Phi-aDh} then yields $\Phi\equiv0$, and hence
 \eqref{Euler-Y} implies
\[
 D(D-1)Y=0.
\]
From the expansion \(Y=\sum_{n=0}^\infty A_nz^n\), it follows that \(n(n-1)A_n=0\) for every \(n\), which gives
\begin{equation}\label{key-afine-equ}
 Y=A_0+A_1z.
\end{equation}

Now we consider the case \(a=1\).  From \eqref{key-first-chain} we infer that
$$ a\sum_{n=0}^\infty n|b_n|^2
 =\sum_{n=0}^\infty n(n-1)|A_n|^2
 =\sum_{n=0}^\infty n(n-1)|b_n|^2$$
and equality holds in \eqref{weighted-majorization} for \(w_n=n(n-1)\).
Thus \eqref{equality-partial} holds.
Since
\[
 w_0=w_1=0,
 \qquad
 w_2=2>w_1,
\]
applying \eqref{equality-partial} gives
\begin{equation}\label{endpoint-low-modes}
 |A_0|^2+|A_1|^2=|b_0|^2+|b_1|^2=|b_0|^2.
\end{equation}
Recalling that \(Y=Xg\) and \(b_1=0\), we derive
\[
 A_0=b_0X_0,
 \qquad
 A_1=b_0X_1.
\]
As \(b_0=g(0)>0\), combining this with \eqref{endpoint-low-modes} yields
\begin{equation}\label{X01-unit}
 |X_0|^2+|X_1|^2=1.
\end{equation}
On the other hand, applying Parseval's identity and \(|X|=1\) on \(\mathbb S^1\) gives
\begin{equation}\label{sum-X01-unit}
 \sum_{n=0}^\infty|X_n|^2=1.
\end{equation}
Combining \eqref{X01-unit} with \eqref{sum-X01-unit}, we conclude that \(X_n=0\) for all \(n\ge2\), and hence
\[
 X=X_0+X_1z.
\]
The Wronskian identity \eqref{Wronskian-X} then reads
\[
 g^{-2}=\det(X',X)=\det(X_1,X_0).
\]
Since \(g\) has no zeros, it follows that \(g\) is constant.  Therefore,
\begin{equation*}\label{equiv-P-zero}
\Phi\equiv0.
\end{equation*}
Arguing as \eqref{key-afine-equ}, \(Y\) is affine.
This completes the proof.
\end{proof}

Using Theorem \ref{Sec-spinor-rigidity}, we now proceed to the proof of Theorem \ref{radial-rigidity}.

\textbf{Proof of Theorem \ref{radial-rigidity}.}
We divide the proof into the following  two  cases.

\textbf{\textbf{Case $1.$}} $\lambda\leq0$.
Using equation \eqref{Onofri-E-L} and integration by parts, we have
\begin{align*}
\int_{\mathbb{B}^2} e^{2u}dx&=-\int_{\mathbb{B}^2}\Delta udx=-\int_{\mathbb{S}^{1}}\frac{\partial u}{\partial\nu}d\sigma
=-\int_{\mathbb{S}^{1}}(e^u-\lambda)d\sigma.
\end{align*}
Hence
\begin{align*}
0<\int_{\mathbb{B}^2} e^{2u}dx+\int_{\mathbb{S}^{1}}e^ud\sigma=2\pi\lambda\leq0.
\end{align*}
This is a contradiction. Therefore, $u$ does not exist.

\textbf{\textbf{Case $2.$}} $0<\lambda\leq2$, $\lambda\neq1$.
Let \(Y\) be the normalized lift from Proposition \ref{normalized-lift}.  From
Theorem \ref{Sec-spinor-rigidity}, we have
\[
 Y(z)=A_0+A_1z.
\]
The Wronskian normalization gives
\begin{equation*}\label{eq-affine-wronskian}
 \det(Y',Y)=\det(A_1,A_0)=1.
\end{equation*}

Since \(z=e^{i\theta}\) on \(\mathbb S^1\), one has
\begin{equation}\label{com-affine}
 |A_0+rzA_1|^2
 =|A_0|^2+r^2|A_1|^2
 +2r\operatorname{Re} \bigl(z\langle A_1,A_0\rangle\bigr).
\end{equation}
Differentiating \eqref{com-affine} with respect to $r$ at $r=1$ yields
\begin{equation}\label{direct-affine}
 \left.\partial_r|A_0+rzA_1|^2\right|_{r=1}
 =2|A_1|^2+2\operatorname{Re} \bigl(z\langle A_1,A_0\rangle\bigr).
\end{equation}
On the other hand, the boundary identity \eqref{abstract-spinor-system} reads
\begin{equation}\label{affine-differ}
 \left.\partial_r|A_0+rzA_1|^2\right|_{r=1}
 =\lambda|A_0+zA_1|^2-2.
\end{equation}
Comparing its first Fourier modes with \eqref{com-affine}-\eqref{affine-differ} gives
\[
 (1-\lambda)\langle A_1,A_0\rangle=0.
\]
Since \(0<\lambda\leq 2\) with $\lambda\neq1$, it follows that
\begin{equation}\label{eq-A-orthogonal}
 \langle A_1,A_0\rangle=0.
\end{equation}
Together with the Gram identity, we have
\[
 1=|\det(A_1,A_0)|^2
 =|A_1|^2|A_0|^2-|\langle A_1,A_0\rangle|^2
 =|A_1|^2|A_0|^2.
\]
Set
\[
 R=|A_1|^2>0.
\]
Then \(|A_0|^2=R^{-1}\), and consequently
\[
 e^{-u(z)}=\frac{|Y(z)|^2}{2}
 =\frac{R^{-1}+R|z|^2}{2}.
\]
Therefore, the solution takes the explicit form
$$u(z)=\log\frac{2R}{1+R^2|z|^2},$$

Comparing the constant Fourier coefficients in \eqref{affine-differ} and using \eqref{com-affine}-\eqref{eq-A-orthogonal} gives
\[
 2R=\lambda\left(R+\frac1R\right)-2.
\]
Consequently, $R$ satisfies
\begin{equation}\label{quar-equa-R}
(\lambda-2)R^{2}-2R +\lambda=0.
\end{equation}
If \(0<\lambda<2\), the polynomial
\[
 P_\lambda(R)=(\lambda-2)R^2-2R+\lambda
\]
is strictly decreasing on \((0,\infty)\). Since \(P_\lambda(0)=\lambda>0\) and
\[\lim_{R\rightarrow\infty}P_\lambda(R)=-\infty,\]
$P_\lambda(R)$ admits a unique positive root.  If \(\lambda=2\), the
equation \eqref{quar-equa-R} reduces to \(R=1\).

A direct calculation gives
\[
 -\Delta\left(\log\frac{2R}{1+R^2|z|^2}\right)
 =\frac{4R^2}{(1+R^2|z|^2)^2}=e^{2u}\quad\mbox{in}\ \mathbb D.
\]
and
\[
 \partial_ru+\lambda-e^u
 =\frac{(\lambda-2)R^2-2R+\lambda}{1+R^2}=0\quad\mbox{on}\ \mathbb S^1.
\]
This completes the proof of Theorem \ref{radial-rigidity}.
\hfill\qedsymbol

\section{Classification of solutions for $\lambda=1$}

In this section,  we finish the proof of Theorem \ref{critical-classification}. We first construct a Hardy-spinor global developing map, and then analyze its projective differential, which identifies the underlying M\"{o}bius structure at $\lambda=1$ and leads to the classification.

\textbf{\textbf{ Proof of Theorem \ref{critical-classification}. }}
We divide the proof into four steps.

\textbf{Step 1. Construction of the global developing map.}
From Theorem \ref{Sec-spinor-rigidity}, we know that \(Y\) is affine, i.e.,
\[
 Y(z)=\binom{y_1(z)}{y_2(z)}=A_0+A_1z=\binom{a+bz}{c+dz}
\]
for some \(a,b,c,d\in\mathbb C\).
By the Wronskian normalization, we have
\[
1=\det(Y',Y)
=y_1'y_2-y_1y_2'
=bc-ad.
\]
Consequently,
\[
f(z)=\frac{y_1(z)}{y_2(z)}
\]
is the restriction to \(\mathbb D\) of a M\"{o}bius automorphism of the
Riemann sphere \(\widehat{\mathbb C}\), namely
$$f=\frac{a+bz}{c+dz},\quad a,b,c,d\in\mathbb C,\ \ bc-ad\neq0.$$
In particular, if
\(y_2(z_0)=0\), then \(f(z_0)=\infty\). Such a point is not a singularity of the developing map when \(f\) is to be interpreted as a sphere-valued holomorphic map.

For \(y_2\ne0\), the Wronskian normalization yields
\[
 f'=\frac{y_1'y_2-y_1y_2'}{y_2^2}=\frac1{y_2^2}.
\]
Using
\[
 e^{-u}=\frac{|Y|^2}{2}=\frac{|y_1|^2+|y_2|^2}{2}
 =\frac{|y_2|^2(1+|f|^2)}{2},
\]
we derive the standard Liouville form
\begin{equation*}\label{lamda-developing-representation}
 e^u=\frac{2|f'|}{1+|f|^2},
 \qquad
 e^{2u}=\frac{4|f'|^2}{(1+|f|^2)^2}.
\end{equation*}
When \(y_2=0\), we introduce the spherical coordinate
\[
\zeta=\frac1f=\frac{y_2}{y_1}.
\]
The identity \(\zeta'=-\frac1{y_1^2}\)
 gives
\[
\frac{4|\zeta'|^2}{(1+|\zeta|^2)^2}
=
\frac4{\left(|y_1|^2+|y_2|^2\right)^2}
=
e^{2u}.
\]
Patching together the chartwise identities yields the global identity
\[
e^{2u}|dz|^2
=
f^*g_{\mathbb S^2},
\]
where $f^*g_{\mathbb S^2}$ is the pullback metric on $\mathbb D$ and
\[
g_{\mathbb S^2}
=
\frac{4|dw|^2}{(1+|w|^2)^2}
\]
is the round metric of constant curvature one.

Since \(f\) is the restriction of a global M\"{o}bius automorphism, it is
injective and maps \(\mathbb D\) biholomorphically onto the open subset
\[
\Omega=f(\mathbb D)\subset\mathbb S^2.
\]

\textbf{\textbf{Step 2. Determination of the spherical image.}}
Since a M\"{o}bius automorphism maps generalized circles to generalized circles, the image
\[
\Gamma=f(\mathbb S^1)
\]
is a spherical circle, and \(\Omega\) is one of the two connected
components of \(\mathbb S^2\setminus \Gamma\). In particular, there exist
\(c\in\mathbb S^2\) and \(\rho\in(0,\pi)\) such that
\begin{equation}\label{Omega-Ball}
\Omega=B_\rho^{\mathbb S^2}(c).
\end{equation}

We now compute the geodesic radius \(\rho\). We first fix the sign convention for the boundary curvature.
For a Riemannian metric \(g\), we define the signed geodesic curvature
with respect to the inward unit normal \(n\) by
\[
k_g=\langle\nabla_TT,n\rangle_g,
\]
where \(T\) is unit tangent vector along the boundary of $\Omega$.

Define
\[
g_0=|dz|^2,
\qquad
g=e^{2u}g_0.
\]
On the boundary \(\mathbb S^1\), let \(T_0\) be a Euclidean
unit tangent and define
the Euclidean inward unit normal by \[
n_0=-\nu.
\]
Then we have
\[
\nabla^{g_0}_{T_0}T_0=n_0.
\]
With respect to the metric $g$, the corresponding unit tangent vector $T$  and inward unit normal $n$ are given by
\[
T=e^{-u}T_0,
\qquad
n=e^{-u}n_0.
\]
The conformal change formula for the connection
\[
\nabla^g_XY
=
\nabla^{g_0}_XY
+X(u)Y+Y(u)X
-g_0(X,Y)\nabla^{g_0}u
\]
gives
\[
\nabla^g_TT
=
e^{-2u}
\left(
\nabla^{g_0}_{T_0}T_0
+T_0(u)T_0
-\nabla^{g_0}u
\right).
\]
Thus, the geodesic curvature is
\[
k_g=g(\nabla^g_TT,n)=e^{-u}\left(1-\partial_{n_0}u\right)=
e^{-u}\left(1+\partial_\nu u\right).
\]
Invoking  \eqref{Onofri-E-L} yields
\[
k_g=1
\qquad\text{on }S^1.
\]

Since
\[
f:(\mathbb D,g)\longrightarrow(\Omega,g_{\mathbb S^2})
\]
is an isometry and maps the inward side of \(\mathbb S^1\) onto the inward
side of \(\partial\Omega\), the geodesic curvature of
\(\partial\Omega\), computed with respect to the inward normal of
\(\Omega\), is also equal to one.

In geodesic polar coordinates centered at \(c\), the round metric is
\[
g_{\mathbb S^2}
=
dr^2+\sin^2r\,d\theta^2.
\]
Along the boundary \(r=\rho\), the unit tangent and the inward unit normal of
\(B_\rho^{\mathbb S^2}(c)\) are
\[
T=\frac1{\sin\rho}\partial_\theta,
\qquad
n=-\partial_r.
\]
Direct computation of the connection yields
\[
\nabla_TT=-\cot\rho\,\partial_r,
\]
and hence
\[
k_{\partial B_\rho(c)}
=
\langle\nabla_TT,n\rangle
=
\cot\rho.
\]
The isometry condition therefore forces
\begin{equation}\label{cot-equa}
\cot\rho=1.
\end{equation}
Since \(\rho\in(0,\pi)\), the equation \eqref{cot-equa} has the unique solution
\begin{equation}\label{cot-value}
\rho=\frac{\pi}{4}.
\end{equation}
In particular, the sign of the boundary condition of \eqref{Onofri-E-L} selects the smaller
spherical cap, rather than its complementary cap.

Choose an orientation-preserving spherical isometry
\[
M\in PSU(2)
\]
such that \(M(c)=0\), where \(0\) denotes the origin in the
stereographic coordinate \(w\). Set
\[
F=M\circ f.
\]
Combining \eqref{Omega-Ball} with \eqref{cot-value} gives
\[
F(\mathbb D)=B_{\pi/4}^{\mathbb S^2}(0).
\]
The spherical distance from \(0\)
to \(w\) with respect to \(g_{\mathbb S^2}\) is
\[
d_{\mathbb S^2}(0,w)
=
\int_0^{|w|}\frac{2\,dt}{1+t^2}
=
2\arctan|w|.
\]
Thus
\[
B_{\pi/4}^{\mathbb S^2}(0)
=
\{w\in\mathbb C:|w|<R\},
\qquad
R=\tan\frac{\pi}{8}=\sqrt2-1.
\]
Hence
\[
F:\mathbb D\longrightarrow\mathbb D_R
\]
 is a biholomorphism,
with $\mathbb D_R=\{w\in\mathbb C:|w|<R\}$, and consequently
\[
\psi=\frac{F}{R}\in \operatorname{Aut}(\mathbb D).
\]

Since \(M\) preserves the round metric, we have
\[
e^{2u}|dz|^2
=
F^*g_{\mathbb S^2}
=
\frac{4|F'|^2}{(1+|F|^2)^2}|dz|^2.
\]
Substituting \(F=R\psi\) yields
\[
e^u
=
\frac{2|F'|}{1+|F|^2}
=
\frac{2R|\psi'|}
{1+R^2|\psi|^2}.
\]
Therefore, the solution takes the form
\begin{equation}\label{R-critical-sol}
u(z)=\log\frac{2R|\psi'(z)|}{1+R^2|\psi(z)|^2}.
\end{equation}

\textbf{\textbf{Step 3. The function \eqref{R-critical-sol} satisfies \eqref{Onofri-E-L}.}}
Define
\[
u_R(w)
=
\log\frac{2R}{1+R^2|w|^2}.
\]
For \(R=\sqrt2-1\), a direct computation gives
\[
-\Delta u_R
=\frac{4R^2}{(1+R^2|w|^2)^2}
=e^{2u_R}\qquad\text{in }\mathbb D,
\]
\[
\partial_\nu u_R+1
=-\frac{2R^2}{1+R^2}+1
=\frac{1-R^2}{1+R^2}=\frac{2R}{1+R^2}=
e^{u_R}\qquad\text{on } \mathbb S^1.
\]

Let \(\psi\in\operatorname{Aut}(\mathbb D)\) and set
\[
u
=
u_R\circ\psi+\log|\psi'|.
\]
Every automorphism of unit disk can be written as
\[
\psi(z)
=
e^{i\beta}\frac{z-a}{1-\overline a z},
\qquad
a\in\mathbb D,\ \ \beta\in\mathbb R.
\]
In particular, \(\psi'\) is holomorphic and non-vanishing in a
neighborhood of \(\overline{\mathbb D}\). Hence
\(\log|\psi'|\) is harmonic. By conformal invariance of the Laplacian in two dimensions, we derive
\[
-\Delta u
=-|\psi'|^2(\Delta u_R)\circ\psi=
|\psi'|^2e^{2u_R\circ\psi}
=e^{2u}
\qquad\text{in }\mathbb D.
\]

It remains to verify the boundary condition of \eqref{Onofri-E-L}.
The derivative of $\psi$ is
\[
\psi'(z)
=
e^{i\beta}
\frac{1-|a|^2}{(1-\overline a z)^2}.
\]
On \(\mathbb S^1\), using
\[
\bar{z}=\frac{1}{z}\quad\mbox{and}\quad |\psi(z)|=1,
\]
we obtain
$$|z-a|^2=1+|a|^2-(a\bar{z}+\bar{a}z)=|1-\bar{a}z|^2=1+|a|^2-(\frac{a}{z}+\bar{a}z).$$
Consequently,
\begin{equation*}\label{preserve-boundary-p}
\frac{z\psi'(z)}{\psi(z)}=
\frac{z(1-|a|^2)}{(1-\bar{a}z)(z-a)}
=\frac{z(1-|a|^2)}{z(1+|a|^2)-(a+\bar{a}z^2)}
=
\frac{1-|a|^2}{|z-a|^2}
=
|\psi'(z)|,
\end{equation*}
which implies
\[
D\psi(z)[\nu_z]=\frac{z}{|z|}\psi'(z)=|\psi'(z)|\psi(z)=|\psi'(z)|\frac{\psi(z)}{|\psi(z)|}
=
|\psi'(z)|\,\nu_{\psi(z)}.
\]
Hence, for every \(h\in C^1(\overline{\mathbb D})\),
\begin{equation}\label{h-differ-comp}
\partial_\nu(h\circ\psi)(z)
=
|\psi'(z)|
(\partial_\nu h)(\psi(z)).
\end{equation}

Moreover, since \(|\bar{a} z|=|a|\) for \(z\in\mathbb S^1\), we have
\begin{align}\label{log-differ}
\partial_\nu\log|\psi'(z)|
&=\left.
\frac{d}{dr}
\right|_{r=1}
\log|\psi'(rz)|\nonumber\\
&=\operatorname{Re}\frac{d}{dr}
|_{r=1}
\log(\psi'(rz))\nonumber\\
&=\operatorname{Re} \frac{z\psi''(z)}{\psi'(z)}\nonumber\\
&=2\operatorname{Re}\frac{\bar{a} z}{1-\bar{a} z}\nonumber\\
&=\frac{2(\operatorname{Re}(\bar{a} z)-|\bar{a} z|^2)}
{|1-\bar{a} z|^2}\nonumber\\
&=|\psi'(z)|-1.
\end{align}
Combining \eqref{h-differ-comp} with \eqref{log-differ} gives
\[
\begin{aligned}
\partial_\nu u+1
&=
|\psi'|(\partial_\nu u_R)\circ\psi
+\partial_\nu\log|\psi'|+1\\
&=
|\psi'|(\partial_\nu u_R)\circ\psi
+|\psi'|\\
&=
|\psi'|
\bigl(\partial_\nu u_R+1\bigr)\circ\psi\\
&=
|\psi'|e^{u_R\circ\psi}
=
e^u.
\end{aligned}
\]
Thus every function \eqref{R-critical-sol} is a solution of \eqref{Onofri-E-L}.

\textbf{\textbf{Step 4. Uniqueness modulo rotations.}}
Suppose that \(\psi_1,\psi_2\in\operatorname{Aut}(\mathbb D)\)
give the same function \(u\). Set
\[
F_j=R\psi_j,
\qquad j=1,2.
\]
Each \(F_j\) maps \(\mathbb D\) biholomorphically onto
\(\mathbb D_R\), extends to a M\"{o}bius automorphism of the Riemann sphere
\(\widehat{\mathbb C}\), and satisfies
\[
F_j^*g_{\mathbb S^2}
=
e^{2u}|dz|^2.
\]
Define
\[
G=F_2\circ F_1^{-1}.
\]
Then \(G\) is the restriction of a M\"{o}bius automorphism of
\(\widehat{\mathbb C}\), maps \(\mathbb D_R\) onto \(\mathbb D_R\), and
\[
G^*g_{\mathbb S^2}
=
g_{\mathbb S^2}
\qquad\text{on }\mathbb D_R.
\]

We now verify that \(G\in PSU(2)\). Choose
\[
A=
\begin{pmatrix}
a&b\\
c&d
\end{pmatrix}
\in SL(2,\mathbb C)
\]
such that
\begin{equation}\label{G-Mobius-exp}
G(w)=\frac{aw+b}{cw+d}.
\end{equation}
Since \(G\) preserves \(g_{\mathbb S^2}\) on \(\mathbb D_R\), we obtain
\begin{equation*}\label{G-metric-sphere}
\frac{|G'(w)|^2}{(1+|G(w)|^2)^2}
=
\frac1{(1+|w|^2)^2}.
\end{equation*}
Together with \(\det A=1\), this yields
\[
|aw+b|^2+|cw+d|^2
=
1+|w|^2
\qquad\text{on }\mathbb D_R.
\]
Then we have
\begin{equation}\label{comp-cofficient}
|a|^2+|c|^2=1,
\qquad
|b|^2+|d|^2=1,
\qquad
a\overline b+c\overline d=0.
\end{equation}
Equivalently,
\[
A^*A=I.
\]
Hence \(A\in SU(2)\), and therefore \(G\in PSU(2)\).

%Note that
%\[
%\mathbb D_R
%=
%B_{\pi/4}^{\mathbb S^2}(0).
%\]
Since \(G\) is a spherical isometry and
\(G(\mathbb D_R)=\mathbb D_R\), we have
\[
B_{\pi/4}^{\mathbb S^2}(G(0))
=
B_{\pi/4}^{\mathbb S^2}(0).
\]
Since $\rho=\pi/4\neq \pi/2$, the spherical ball $B_{\pi/4}^{\mathbb S^2}(0)$ has a unique
center, which forces
\[
G(0)=0.
\]
Substitution this into \eqref{G-Mobius-exp} and using \eqref{comp-cofficient} yields \(b=0\) and \( c=0\),
so
\[
G(w)=e^{i\theta}w
\]
for some \(\theta\in\mathbb R\). Consequently,
\[
R\psi_2
=
G\circ(R\psi_1)
=
e^{i\theta}R\psi_1,
\]
and hence
\(
\psi_2=e^{i\theta}\psi_1.
\)
Thus \(\psi\) is unique up to the rotational factor $e^{i\theta}$ for some $\theta\in \mathbb R$.

\section{The proof of Theorem \ref{pde-quadratic} }
\

This section is devoted to the proof of  Theorem \ref{pde-quadratic}. More precisely, we derive a dichotomy: solutions are either radial, or they must have a nonzero quadratic obstruction at the origin for $2<\lambda\leq3$.

\subsection{Quadratic obstruction for $\lambda>2$}
\

In this subsection, we establish a quantitative bound for the higher-order modes \(n\geq3\) using the second moment.

From Section 2, we know that for \(a>1\) (equivalently \(\lambda>2\)), the first moment no longer forces the outer factor $g$ to be constant.
However, the first moment selects the only possible leading nonconstant outer mode when \(a<2\).  The second
moment then controls the full higher frequency tail. This leads to the following theorem, which plays a key role in proving Theorem \ref{pde-quadratic}.

\begin{theorem}\label{quadratic-obstruction}
Let \(1<a\leq2\) and \(Y\) satisfy
\eqref{abstract-spinor-system}.  Let \(g\) be the unique normalized scalar outer function satisfying
\[
 g(0)>0,
 \qquad
 |g|^2=|Y|^2\quad\text{on }\mathbb S^1,
\]
Then \(b_1=0\), and exactly one of the following holds.

 $(i)$ \(g\) is constant, and hence \(Y\) is affine;

 $(ii)$ \(b_2\ne0\) and
\begin{equation}\label{tail-general}
 \sum_{n=3}^\infty n^2\bigl((n-1)^2-a^2\bigr)|b_n|^2
 \le 4(a^2-1)|b_2|^2.
\end{equation}
\end{theorem}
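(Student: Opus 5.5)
The plan is to combine the first and second moment identities of Proposition \ref{prop-moment-identities} with the spectral majorization of Lemma \ref{spectral-majorization}. First, since $a>1\neq 0$, Proposition \ref{projective-outer} gives $h_1=b_1=0$ directly. If $g$ is constant, then $Dh=0$, so $\Phi=aDh\equiv0$ by \eqref{Phi-aDh}. Then \eqref{Euler-Y} gives $n(n-1)A_n=0$, so $Y$ is affine, exactly as in \eqref{key-afine-equ}. This is alternative (i). It remains to show that if $g$ is nonconstant, then $b_2\neq0$ and \eqref{tail-general} holds. The two alternatives are mutually exclusive, because (ii) requires $b_2\ne0$.

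For the tail estimate, I would use the second moment. Lemma \ref{spectral-majorization} applies with the nondecreasing weights $w_n=n^2(n-1)^2$. Both series are finite because $Y,g\in A^\infty$. Combined with \eqref{second-moment}, this gives
\[
a^2\sum_{n\ge0}n^2|b_n|^2=\sum_{n\ge0}n^2(n-1)^2|A_n|^2\ge\sum_{n\ge0}n^2(n-1)^2|b_n|^2 .
\]
Rearranging and using $b_1=0$ yields
\[
\sum_{n\ge2}n^2\bigl((n-1)^2-a^2\bigr)|b_n|^2\le0 .
\]
The $n=2$ term equals $4(1-a^2)|b_2|^2$. Moving it to the right-hand side gives exactly \eqref{tail-general}.

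To get $b_2\neq0$ when $g$ is nonconstant, I would argue by contradiction and suppose $b_2=0$. Then \eqref{tail-general} reads $\sum_{n\ge3}n^2((n-1)^2-a^2)|b_n|^2\le0$. For $n\ge3$ we have $(n-1)^2\ge4\ge a^2$, with strict inequality whenever $n\ge4$, or $n=3$ with $a<2$. So every term is nonnegative, which forces $b_n=0$ for all $n\ge4$, and also $b_3=0$ when $a<2$. Together with $b_1=b_2=0$, this makes $g$ constant, a contradiction.

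The one delicate point is the endpoint $a=2$, where the $n=3$ coefficient vanishes. There I would use the first moment \eqref{first-moment} with the weights $w_n=n(n-1)$, as in \eqref{key-first-chain}, which gives
\[
a\sum n|b_n|^2\ge\sum n(n-1)|b_n|^2, \quad\text{i.e.}\quad \sum_n n(n-1-a)|b_n|^2\le0 .
\]
With $b_1=b_2=0$ and $a=2$, the $n=3$ weight vanishes again, so the first moment does not help either. The way out is to note that $g=b_0+b_3z^3$ is nonconstant and then exploit the equality case. Since every term in both sums vanishes, equality holds in \eqref{weighted-majorization} for both weight sequences, and \eqref{equality-partial} gives $|A_0|^2+|A_1|^2=|b_0|^2$. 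As in the proof of Theorem \ref{Sec-spinor-rigidity}, this yields $A_j=b_0X_j$ for $j\le1$ and $X=X_0+X_1z$. The Wronskian identity \eqref{Wronskian-X} then forces $g^{-2}$ to be constant, which contradicts $b_3\ne0$.

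I expect this endpoint bookkeeping to be the main obstacle. In particular, one must check that $b_1=b_2=0$ is what makes $A_0=b_0X_0$ and $A_1=b_0X_1$ valid.
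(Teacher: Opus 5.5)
Your proposal is correct. Up to the endpoint it matches the paper: $b_1=0$ from Proposition \ref{projective-outer}, the tail bound from Lemma \ref{spectral-majorization} with $w_n=n^2(n-1)^2$ and \eqref{second-moment}, and positivity of $(n-1)^2-a^2$ for $n\ge3$ when $a<2$. At $a=2$, once you are reduced to $g=b_0+b_3z^3$, your route is genuinely different from the paper's.

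The paper returns to the exact Hardy equation \eqref{exact-Hardy}. For this cubic one has $z^2g''=2zg'$, so it gets $(g^2X')'=0$, hence $X=U+VH$ with $H'=g^{-2}$. The symmetry $H(\omega z)=\omega H(z)$, $\omega=e^{2\pi i/3}$, together with $|X|=1$ shows that $|H|$ is constant on $\mathbb S^1$. A proper-map/covering argument then identifies $H/\rho$ with an automorphism fixing $0$, hence the identity, so $g$ is constant.

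You instead observe that for this $g$ and $a=2$ the moment inequality is saturated. Indeed $a\sum n|b_n|^2=6|b_3|^2=\sum n(n-1)|b_n|^2$, and both sides equal $36|b_3|^2$ for the second moment. So the equality clause \eqref{equality-partial} applies at $N=1$. The argument from the case $a=1$ of Theorem \ref{Sec-spinor-rigidity} (Parseval for $X$, then \eqref{Wronskian-X}) then gives $g^{-2}=\det(X_1,X_0)$, a constant, which is the required contradiction. This is valid.

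Your route is considerably shorter: it needs no ODE for $X$, no cube-root symmetry and no covering argument. It also shows that whenever $b_1=0$ and equality holds in \eqref{weighted-majorization} for $w_n=n(n-1)$ or $w_n=n^2(n-1)^2$, $g$ must be constant. The paper's version is longer and uses the specific structure of $g$, but it derives the rigidity directly from the exact Hardy equation rather than from the equality case of the majorization lemma.

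One small correction to your final remark: only $b_1=0$ is needed for $A_0=b_0X_0$ and $A_1=b_0X_1$. The role of $b_2=0$, together with $b_n=0$ for $n\ge4$ and $a=2$, is to force equality in \eqref{weighted-majorization}.
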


\begin{proof}
From Proposition \ref{projective-outer}, we obtain \(b_1=0\).
Applying Lemma \ref{spectral-majorization} with
\(w_n=n^2(n-1)^2\) and using \eqref{second-moment} yield
\begin{equation}\label{second-outer-inequality}
 \sum_{n=2}^\infty n^2\bigl((n-1)^2-a^2\bigr)|b_n|^2\le0.
\end{equation}
For \(n=2\), we have
\[
 n^2\bigl((n-1)^2-a^2\bigr)|b_n|^2= 4(1-a^2)|b_2|^2
\]
and \eqref{tail-general} follows immediately from \eqref{second-outer-inequality}.

It remains to establish the dichotomy. Suppose that \(g\) is constant. Combining \eqref{g-def} with \eqref{Phi-aDh} gives
\[
\Phi=aDh=0.
\]
Arguing as \eqref{key-afine-equ},  we derive that alternative \({\rm(i)}\) holds.

Suppose now that \(g\) is nonconstant. We claim that
\[
b_2\neq0.
\]
To prove this,  assume to the contrary that \(b_2=0\).
If \(1<a<2\), then for \(n\geq3\), we have
\[
(n-1)^2-a^2
\geq4-a^2>0.
\]
Substitution this estimate into \eqref{tail-general} yields
\[\sum_{n=3}^\infty n^2(4-a^2\bigr)|b_n|^2\leq\sum_{n=3}^\infty n^2\bigl((n-1)^2-a^2\bigr)|b_n|^2=0\]
It follows that
\[
b_n=0
\qquad\text{for every }n\geq3.
\]
Together with \(b_1=b_2=0\), this gives
\[
g\equiv b_0,
\]
which contradicts the assumption that \(g\) is nonconstant. Thus \(b_2\neq0.\)

For \(a=2\), the spectral coefficient $(n-1)^2-4$ vanishes at $n=3$, whereas
\[
(n-1)^2-4=(n-3)(n+1)>0
\qquad\text{for every }n\geq4.
\]
Inequality \eqref{tail-general} gives \(b_n=0\) for every $n\geq4$.
Consequently
\begin{equation}
\label{endpoint-cubic-g}
g(z)=b_0+b_3z^3.
\end{equation}

We show that \(b_3=0\).
Using \(D(D-1)f=z^2f''\) and \(Dg=zg'\)
in \eqref{exact-Hardy}, we obtain
\[
z^2(gX)''=2zg'X.
\]
Together with \eqref{endpoint-cubic-g}, this implies \(z^2g''=2zg'\),
and thus
\[
z^2\bigl((gX)''-g''X\bigr)=0.
\]
The preceding identity gives
\[
(gX)''=g''X
\qquad\text{in }\mathbb D\setminus \{0\},
\]
and holomorphicity of $X$ and $g$ extends the identity across $z=0$. This implies
\[
g(2g'X'+gX'')=(g^2X')'=0.
\]
Hence there exists a constant vector \(V\in\mathbb C^2\) such that
\[
g^2X'=V.
\]

Define
\[
H(z)=\int_0^z\frac{d\zeta}{g(\zeta)^2}.
\]
From \eqref{Wronskian-X}, we know that \(g\) has no zeros on
\(\overline{\mathbb D}\). Consequently,
\[
H\in A^\infty(\mathbb D),
\qquad
H(0)=0,
\qquad
H'=g^{-2}\neq0.
\]
Set \(U=X(0)\). Integrating \(g^2X'=V\) gives
\[
X=U+VH.
\]
Substitution into \eqref{Wronskian-X} yields
\[
g^{-2}
=
\det(X',X)
=
g^{-2}\det(V,U),
\]
Consequently, \(\det(V,U)=1\),
and in particular \(V\neq0\).

Let
\[
\omega=e^{2\pi i/3}.
\]
Since \(g(\omega z)=g(z)\), the function
\[
\widetilde H(z)=\omega^{-1}H(\omega z)
\]
satisfies
\[
\widetilde H'(z)=H'(z),
\qquad
\widetilde H(0)=H(0)=0.
\]
Hence
\begin{equation}
\label{endpoint-H-symmetry}
H(\omega z)=\omega H(z)
\qquad\text{in }\mathbb D.
\end{equation}

For fixed \(z\in\mathbb S^1\), \eqref{X-inner} and
\eqref{endpoint-H-symmetry} give
\[
1
=
|X(\omega^kz)|^2
=
|U+\omega^kVH(z)|^2,\quad k=0,1,2
\]
Summing these identities and using
\[
1+\omega+\omega^2=0,
\]
we obtain
\[
3=|U+VH(z)|^2+|U+\omega VH(z)|^2+|U+\omega^2VH(z)|^2
=
3|U|^2+3|V|^2|H(z)|^2.
\]
Thus \(|H|\) is constant on \(\mathbb S^1\), so there exists
\(\rho\geq0\) such that
\[
|H|=\rho
\qquad\text{on }\mathbb S^1.
\]
Moreover, \(\rho>0\). Otherwise, \(H=0\) on
\(\mathbb S^1\), and by the maximum principle we have \(H\equiv0\),
contradicting \(H'\neq0\).

Define
\[
\psi=\frac{H}{\rho}.
\]
Then
\[
\psi(0)=0,
\qquad
|\psi|=1\quad\text{on }\mathbb S^1,
\qquad
\psi'=\frac{1}{\rho g^2}\neq0.
\]
Applying the maximum principle gives
\[
\psi(\mathbb D)\subset\mathbb D.
\]
The continuous extension of \(\psi\) to
\(\overline{\mathbb D}\), together with  \(\psi(\mathbb S^1)\subseteq\mathbb S^1\),
makes the map \(\psi: \mathbb D\rightarrow\mathbb D\) proper. Indeed, for every compact set \(K\Subset\mathbb D\), the set \(\psi^{-1}(K)\) is closed in \(\overline{\mathbb D}\) and disjoint from \(\mathbb S^1\), and is therefore compactly contained in $\mathbb D$.
The condition \(\psi'\neq0\) makes this proper holomorphic map unbranched, so it is a covering of \(\mathbb D\). As \(\mathbb D\) is simply connected, the covering is  one-sheeted, and hence
\[\psi\in\operatorname{Aut}(\mathbb D).\]

An automorphism of $\mathbb D$ fixing the origin is a rotation, so
\(\psi(z)=e^{i\gamma}z\) for some \(\gamma\in\mathbb R\).
Moreover, the identity
\[
\psi'(0)=\frac{1}{\rho g(0)^2}=\frac{1}{\rho b_0^2}>0
\]
forces \(\psi'(0)=e^{i\gamma}=1\), giving
\[
\psi(z)=z,
\qquad
\rho=b_0^{-2}.
\]
It follows that
\[
H(z)=b_0^{-2}z
,\quad
g^{-2}=H'= b_0^{-2}.
\]
The normalization \(g(0)=b_0>0\), together with the connectedness of
\(\mathbb D\), forces
\[
g\equiv b_0.
\]
This contradicts the assumption that \(g\) is nonconstant.
Thus \(b_2\neq0.\)
Alternative \({\rm(ii)}\) holds, and the proof is complete.

\end{proof}

\subsection{PDE form of the quadratic obstruction}

In this subsection, we finish the proof of Theorem \ref{pde-quadratic}.

The following proposition converts the spectral obstruction $b_2\neq0$ into the projective obstruction $q(0)\ne0$, thereby providing the final ingredient in the proof of Theorem \ref{pde-quadratic}.

\begin{proposition}\label{prop-q0-b2}
We have
\begin{equation*}
 q(0)=2a\frac{b_2}{b_0}.
\end{equation*}
Moreover, if \(1<a\leq2\), then
\begin{equation}\label{eq-q0-nonzero}
 q(0)\ne0
\end{equation}
for every non-affine lift $Y$.
\end{proposition}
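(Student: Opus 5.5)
The plan is to compare the second-order Taylor coefficients on both sides of the identity \eqref{Phi-aDh}, $\Phi=aDh$, from Proposition \ref{projective-outer}.

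By \eqref{Phi-def}, $\Phi(z)=z^2q(z)=q(0)z^2+O(z^3)$. On the other side, $Dh=\sum_{n\ge1}nh_nz^n$, so the coefficient of $z^2$ in $aDh$ is $2ah_2$. This gives
\[
q(0)=2ah_2.
\]
It remains to express $h_2$ through the Taylor coefficients of $g=e^h$.

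From $g=e^h$ we have $g'=h'g$, so $g''=h''g+h'g'$. At the origin this reads
\[
2b_2=2h_2b_0+h_1b_1 .
\]
When $a\neq0$, Proposition \ref{projective-outer} gives $h_1=b_1=0$, hence $b_2=h_2b_0$. Since $b_0=g(0)>0$, we obtain $h_2=b_2/b_0$, and therefore $q(0)=2ab_2/b_0$.

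When $a=0$, the identity \eqref{Phi-aDh} gives $\Phi\equiv0$ directly. Then $q(0)=0$, which agrees with the formula since its right-hand side carries the factor $a=0$. So the identity holds in every case, with no need for the relation $b_2=h_2b_0$ when $a=0$.

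For the second claim, let $1<a\le2$ and let $Y$ be a non-affine lift. By Theorem \ref{quadratic-obstruction}, exactly one of its alternatives holds. Alternative (i) says that $g$ is constant and $Y$ is affine, which is excluded by assumption. Hence alternative (ii) holds, so $b_2\neq0$. Because $a\neq0$ and $b_0>0$, the formula just proved gives $q(0)=2ab_2/b_0\neq0$, which is \eqref{eq-q0-nonzero}.

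All the substantive work is already contained in Theorem \ref{quadratic-obstruction}. The only points needing care here are the vanishing of $h_1$, which is guaranteed by Proposition \ref{projective-outer} because $a\neq0$, and the separate treatment of $a=0$. With those settled, the argument is a routine coefficient comparison.
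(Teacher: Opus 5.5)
Your proposal is correct and follows the paper's proof. You compare the $z^2$ coefficients in $\Phi=aDh$ to get $q(0)=2ah_2$, use $h_1=0$ to obtain $b_2=b_0h_2$, and get the nonvanishing from alternative (ii) of Theorem~\ref{quadratic-obstruction}. Your separate treatment of $a=0$, where $h_1=0$ is unavailable but $\Phi\equiv0$ makes the identity trivial, is a small point the paper leaves implicit.
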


\begin{proof}
From Proposition \ref{projective-outer}, we know that
\[
 \Phi(z)=aDh(z)=2ah_2z^2+O(z^3),
\]
\[g(z)=b_0+\sum_{n=2}^\infty b_nz^n= e^{h(z)}=e^{h_0}e^{\sum_{n=2}^\infty h_nz^n}.\]
Thus $b_0=g(0)>0$ and \( b_2=b_0h_2\).
On the other hand, since \(\Phi=z^2q\), we have
\[
 \Phi(z)=q(0)z^2+O(z^3),
\]
which yields
$$ q(0)=2ah_2=2a\frac{b_2}{b_0}.$$

We now prove \eqref{eq-q0-nonzero}.
Assume that \(Y\) is non-affine. Indeed, Theorem \ref{quadratic-obstruction} rules out a constant outer factor $g$, because that would force $Y$ to be affine. Alternative (ii) of Theorem \ref{quadratic-obstruction} therefore gives $b_2\neq0$, and the preceding identity yields
$q(0)\ne0$.
\end{proof}

We are now ready to prove  Theorem \ref{pde-quadratic} and Corollary \ref{radial-all-lambda}.
\

\textbf{\textbf{ Proof of Theorem \ref{pde-quadratic}. }}
By Theorem \ref{quadratic-obstruction}, if \(g\) is constant, then \(Y\)
is affine. Following the same argument as in the proof of Theorem \ref{radial-rigidity}, we conclude that
\(u\) is radial and has the form \eqref{R-class-large}.

If \(g\) is nonconstant, applying Proposition \ref{prop-q0-b2} and \eqref{eq-q-u} yields
\[q(0) =u_z(0)^2-u_{zz}(0)\ne0.\]

If \(u\) is radially symmetric, then
\begin{equation}\label{diff-u-once}
 u(e^{i\alpha}z)=u(z).
\end{equation}
Differentiating \eqref{diff-u-once} in \(z\) gives
\[
 u_z(e^{i\alpha}z)=e^{-i\alpha}u_z(z),
 \qquad
 u_{zz}(e^{i\alpha}z)=e^{-2i\alpha}u_{zz}(z).
\]
Hence
\[
 q(e^{i\alpha}z)=e^{-2i\alpha}q(z).
\]
If \(q(z)=\sum_{n=0}^\infty q_nz^n\), then for every \(\alpha\),
\[
 \sum_{n=0}^\infty q_ne^{in\alpha}z^n
 =e^{-2i\alpha}\sum_{n=0}^\infty q_nz^n.
\]
Since $n+2\neq0$, we derive \(q_n=0\) for every $n$, and hence
\begin{equation}\label{radial-q-zero}
 q\equiv0.
\end{equation}
Thus every radial solution satisfies \(q(0)=0\). Therefore
alternative \textup{(ii)} cannot occur in the radial case.

We now consider the case \(1+\sqrt2<\lambda\leq3\) . Suppose, for contradiction, that \(u\) is radially symmetric.
Substituting \eqref{eq-q-u} and \eqref{radial-q-zero} into \eqref{diff-p-zz} gives
\[p_{zz}=0.\]
Writing \( p(z)=P(|z|^2)\), we obtain
\[
 p_{zz}=P''(|z|^2)\bar z^2.
\]
Thus \(P''=0\), and $p$ must take the form
\begin{equation}\label{classi-p-two}
 p(z)=\alpha+\beta|z|^2.
\end{equation}
Substituting this into \eqref{p-diff-identity} yields
\[
 \alpha\beta=\frac14.
\]
Since \(\alpha=p(0)>0\), we also have
\(\beta>0\). Setting \(R=2\beta\), we obtain
\[
\alpha=\frac{1}{2R},
\qquad
\beta=\frac{R}{2}.
\]
Together with \eqref{classi-p-two} yields
$$p=e^{-u}=\frac1{2R}+\frac R2|z|^2.$$
This yields the radial form \eqref{R-class-large}. Finally, the boundary condition in \eqref{Onofri-E-L} leads to
\begin{equation}\label{lam-R-equation}
(\lambda-2)R^{2}-2R +\lambda=0.
\end{equation}

The discriminant of \eqref{lam-R-equation} is
\[
 \Delta=4\bigl(1+2\lambda-\lambda^2\bigr)
 =4\bigl(2-(\lambda-1)^2\bigr).
\]
If \(\lambda>1+\sqrt2\), then we have
\[
 \Delta<0.
\]
Hence the equation \eqref{Onofri-E-L} admits no radial
solution when \(1+\sqrt2<\lambda\leq3\).
\hfill\qedsymbol

\textbf{ Proof of Corollary \ref{radial-all-lambda}. }
As shown in the proof of Theorem~\ref{pde-quadratic}, every radial
solution is of the form
\[
u(z)=\log\frac{2R}{1+R^2|z|^2},
\qquad R>0,
\]
where \(R\) satisfies
\begin{equation}
\label{R-equation}
(\lambda-2)R^2-2R+\lambda=0.
\end{equation}
Conversely, a direct computation shows that every positive root of
\eqref{R-equation} gives a radial solution of
\eqref{Onofri-E-L}.

The discriminant of quadratic equation \eqref{R-equation} is
$$\Delta = (-2)^{2} - 4(\lambda-2)\lambda=4\bigl(2 - (\lambda-1)^{2}\bigr).$$
If \(0<\lambda<2\), then \(\Delta>0\), and
\eqref{R-equation} has exactly one positive root
\[
R=\frac{\lambda}
{1+\sqrt{1+2\lambda-\lambda^2}}.
\]
Similarly, a direct calculation gives $(ii)-(v)$.

\vskip 1cm
\noindent {\bf Acknowledgements}\\
%The author would like to thank the referee for his/her careful reading of the manuscript and many good suggestions.
This project is supported by  the National Natural Science Foundation of China (Grant No. 12571124, 12471109), Youth Innovation Team of Shaanxi Universities, and the Fundamental Research Funds for the Central Universities (Grant No. GK202402004, GK202506023).

%% bibliography--------------------------------------------------------------------
%\begin{center}

 \end{document}